\numberwithin{equation}{section}
\newtheorem{theorem}{Theorem}[section]
\newtheorem{lem}[theorem]{Lemma}
\newtheorem{corollary}[theorem]{Corollary}
\theoremstyle{definition}
\newtheorem{definition}[theorem]{Definition}
\theoremstyle{remark}
\newtheorem*{thm*}{Theorem}
\newtheorem*{conj*}{Conjecture}
\newcommand{\Z}{\mathbb{Z}}
\newcommand{\PP}{\mathbb{P}}
\newcommand{\SL}{\operatorname{SL}}
\newcommand{\C}{\mathbb{C}}
\begin{document}

\title[Products of Eisenstein Series with Multiplicative Power Series]{Products of Eisenstein Series with Multiplicative Power Series}
\author{Boyuan Xiong}
\address{Department of Mathematics, Indiana University, Bloomington, IN 47405, USA}
\email{boyxiong@iu.edu}

\begin{abstract}
We say a power series $a_0+a_1q+a_2q^2+\cdots$ is \emph{multiplicative} if $n\mapsto a_n/a_1$ for positive integers $n$ is a multiplicative function.  Given the Eisenstein series $E_{2k}(q)$, we consider formal multiplicative power series $g(q)$ such that the product $E_{2k}(q)g(q)$ is also multiplicative. For fixed $k$, this requirement leads to an infinite system of polynomial equations in the coefficients of $g(q)$. The initial coefficients can be analyzed using elimination theory. Using the theory of modular forms, we prove that each solution for the initial coefficients of $g(q)$ leads to one and only one solution for the whole power series, which is always a quasimodular form. In this way, we determine all solutions of the system for $k \le 20$.

For general $k$, we can regard the system of polynomial equations as living over a symbolic ring. Although this system is beyond the reach of computer algebra packages, we can use a specialization argument to prove it is generically inconsistent. This is delicate because resultants commute with specialization only when the leading coefficients do not specialize to $0$. Using a Newton polygon argument, we are able to compute the relevant degrees and justify the claim that for $k$ sufficiently large, there are no solutions.

These results support the conjecture that $E_{2k}(q)g(q)$ can be multiplicative only for $k = 2, 3, 4, 5, 7$.
\end{abstract}

\maketitle

\section{Introduction}

If $f(q)$ and $g(q)$ are $q$-expansions of normalized Hecke eigenforms, then the coefficients $a_n$ and $b_n$ of $f$ and $q$ respectively are both multiplicative sequences in the sense that $a_{mn} = a_m a_n$ for $(m,n)=1$ and likewise for $b$.
There is no obvious relation between the Hecke operators at different weights and the product structure on the ring of modular forms, so in general one would not expect the product of two Hecke eigenforms to again be a Hecke eigenform.
However, it sometimes happens that $f(q) g(q)$ has multiplicative coefficients, often because it lies in a $1$-dimensional space on which the relevant Hecke operators act.

For example, let $M_k$ (resp. $S_k$) denote the space of modular forms (resp. cusp forms) for $\SL_2(\Z)$ of weight $k$. Then, $\dim M_4 = \dim S_{12} = \dim S_{16} = 1$, so we must have the identity
$$E_4(q) \Delta_{12}(q) = \Delta_{16}(q),$$
where
\begin{equation}
\label{one}
E_{2k}(q) := 1 - \frac{4k}{B_{2k}}\sum_{n=1}^\infty \sigma_{2k-1}(n) q^n
\end{equation}
is a (non-normalized) Hecke eigenform of level $1$ and weight $2k$ 
and $\Delta_m(q)$ is the unique normalized cusp form of level $1$ and weight $m$ for $m\in \{12,16\}$.

Matthew Johnson \cite{Johnson} gave a complete classification of pairs $(f,g)$ of Hecke eigenforms on $\Gamma_1(N)$
whose product is again an eigenform, following earlier results of Ghate \cite{Ghate}, Duke \cite{Duke}, and Emmons\cite{Emmons}.
In each case, at least one of the two factors is an Eisenstein series, though not necessarily exactly of the form \eqref{one}; there may be a character or an omitted Euler factor.

In this paper, we generalize this as follows. We ask for products $f(q) g(q)$, where $f(q)$ is of the form \eqref{one}, $g(q)$ is \emph{any} power series $q + \sum_{n=2}^\infty b_n q^n$, where the $b_n$ are multiplicative, and the coefficients $c_n$ in  
$$f(q) g(q) = q + \sum_{n=2}^\infty c_n q^n$$
are again multiplicative. A basic question is whether for all such products $g(q)$ is the $q$-expansion of a modular form. 

Regarding the coefficients $b_n$, where $n$ is a prime power, as degrees of freedom in choosing $g(q)$, the 
relations $c_{mn} = c_m c_n$ for $(m,n)=1$, are constraints. Since most integers are not prime powers, this is in some sense an overdetermined system of polynomial equations, and we therefore expect solutions to be in some sense rare and special.

This problem is similar in spirit to the question considered by Larsen \cite{Larsen} about  power series $f(q)$ with multiplicative coefficients such that $f(q)^2$ also has multiplicative coefficients. Larsen found solutions which are essentially Eisenstein series and others which are rational functions. He did not prove that his list of solutions was complete but did show that the full solution set can be identified with the points on a finite dimensional variety (not necessarily irreducible).

In this paper, we give an exhaustive list of solutions for all $k\le 20$. With one exception, the $g(q)$ are indeed modular forms; the exception is still quasimodular in the sense of Kaneko-Zagier \cite{KZ}.
We also show that for $k$ sufficiently large, there are no solutions. From this it easily follows that the set of multiplicative $g(q)$ for which there exists $k$ such that $E_{2k}(q)g(q)$ is also multiplicative can be identified with the points of a finite dimensional variety.

Our basic strategy is to analyze the system of polynomial equations in the variables $b_{p^i}$ determined by the equations of the form 
$c_{mn} = c_m c_n$. The coefficients of these equations can be expressed as polynomials in terms of $q^{2k-1}$, where $q$ ranges over the primes, and $\frac{B_{2k}}{4k}$. 
For fixed $k$, 
this system becomes overdetermined  when one looks at coefficients up to $40$.
At this point, we are dealing with $19$ variables. Many of the equations are linear in some of the variables, but even after this fact has been exploited, 
the system is near the limit of what can be handled by computer algebra systems. 
It is not difficult to show that the values of $b_n$ for $n\le 9$ determine $g(q)$, so we need only find, for each solution to our system of polynomial equations, an actual modular or quasimodular function with the specified 
initial coefficients, which can be proved to have the desired multiplicativity properties.

To rule out solutions for large $k$ is substantially more difficult. The coefficients of $f(q)$ depend on $k$ in an exponential way. Our strategy is to treat expressions of the form $p^{k-1}$ as parameters, perform the computer algebra computations, and then solve for $k$ at the end. The trouble is that this requires solving a system of $20$ equations in $19$ variables over a polynomial ring in the $13$ variables identified with $\frac{B_{2k}}{4k},2^{2k-1}, 3^{2k-1}, 5^{2k-1},\ldots, 37^{2k-1}$, which appears to be well beyond the limits of existing computer algebra systems.

What makes it nevertheless possible to prove our theorem is that a system of equations can be proved inconsistent by a specialization argument. 
Consider the $n$ polynomials $f_{1,1},\ldots,f_{1,n}$ in $n-1$ variables $x_1,\ldots,x_{n-1}$ over an integral domain $A$.
Now consider a homomorphism 
$\phi\colon A\to B$ to another integral domain. We denote by $\bar f_{1,i}$ the image of $f_{1,i}$.  Let $K$ and $L$ denote the fraction fields of $A$ and $B$ respectively. For $1\le i\le j-1 < n$, we define $f_{i+1,j}$ to be the resultant of $f_{i,i}$ and $f_{i,j}$ with respect to $x_i$.
If $(a_1,\ldots,a_{n-1})\in \bar K^{n-1}$ is a solution of all $f_{1,j}=0$, then by induction, $(a_{i+1},\ldots,a_{n-1})$ is a solution of all $f_{i+1,j}=0$.
If $f_{n,n}\in A$ is non-zero, therefore, the system has no solution in $\bar K$.

Now if the $x_i$-degrees of $f_{i,j}$ and $\bar f_{i,j}$ are the same for all $i\le j\le n$, then taking resultants commutes with applying $\phi$, so it suffices to prove that
$\phi(f_{n,n}) = \bar f_{n,n}\neq 0$, iteratively computing $\bar f_{i,j}$, by taking resultants in $B$. To make this work, however, we need to keep track of the $x_i$-degrees of the $f_{i,j}$ and make sure they always match the $x_i$-degrees of the $\bar f_{i,j}$. Computing the degrees of the $f_{i,j}$ is therefore the main task of this paper; fortunately, it is much easier than computing  the polynomials themselves. We make essential use of the Bernstein theorem \cite{Minding} \cite{Bernstein}\cite{GKZ} at the key step of reducing from three equations in two unknowns to two equations in one unknown.

The actual strategy is slightly more complicated than what is described here because it turns out that if one follows this elimination procedure for our particular polynomials, at some stage we obtain $f_{i,i} = \cdots = f_{i,n} = 0$. This happens because there is a common factor $g$ among $f_{i-1,i-1},\ldots,f_{i-1,n}$. 
By removing this factor from all the $f_{i-1,j}$, we obtain a new sequence, with which we proceed as before, but we need to consider separately the solutions where $g=0$. This leads to a new, less computationally demanding, analysis of the same kind.

\section{Solutions for small values of \texorpdfstring{$k$}{k}}

Let $\mathbb{P}$ denote the set of prime powers:
\[
\mathbb{P} = \{p^n : p \ \text{prime},\ n>0\}.
\]
Suppose
\[
f(q) g(q) = q+\sum_{n=2}^\infty c_{k,n}q^n
\]
with
\[
f(q)=E_{2k}(q),\qquad
g(q)=q+\sum_{n=2}^\infty b_n q^n.
\]
Then
\[
c_{k,n} = b_n -\frac{4k}{B_{2k}} \sum_{i = 1}^{n-1} \sigma_{2k-1}(i)\, b_{n-i}
\quad\text{for all } n \geq 2.
\]
From the relation $c_{k,mn} = c_{k,m}c_{k,n}$ we deduce
\[
\Bigg(b_m -\frac{4k}{B_{2k}} \sum_{i = 1}^{m-1} \sigma_{2k-1}(i)b_{m-i}\Bigg)
\Bigg(b_n -\frac{4k}{B_{2k}} \sum_{i = 1}^{n-1} \sigma_{2k-1}(i)b_{n-i}\Bigg)
= b_{mn} -\frac{4k}{B_{2k}} \sum_{i = 1}^{mn-1} \sigma_{2k-1}(i)b_{mn-i}
\]
whenever $(m,n)=1$. Expanding the left-hand side and using $b_{mn} = b_mb_n$ gives
\begin{equation}\label{two}
S_{k,mn} = b_mS_{k,n} + b_nS_{k,m} - \frac{4k}{B_{2k}}S_{k,m}S_{k,n},
\end{equation}
where $S_{k,n} = \sum_{i=1}^{n-1}\sigma_{2k-1}(i)b_{n-i}$.

Define
\[
E_{k,mn} = S_{k,mn} - \big(b_mS_{k,n} + b_nS_{k,m} - \tfrac{4k}{B_{2k}}S_{k,m}S_{k,n}\big)
\quad \text{for all } (m,n)=1.
\]
\footnote{The definition of $E_{k,n}$ in \eqref{two} is not unique, since $n$ may factor into coprime integers in different ways (e.g.\ $30=2\cdot 15=3\cdot 10=5\cdot 6$). Nevertheless, it suffices to take any one such relation for each $n\notin \mathbb{P}$.}

Thus our system consists of the variables $b_n$, together with the polynomials $E_{k,n}$ and $b_{mn}-b_mb_n$. We focus on the subsystem containing all $b_n$ and $E_{k,n}$ with $n\leq 40$, namely
\[
(\{b_n\}_{n\leq 40};\ \{E_{k,n}\}_{n\leq 40}\cup \{b_{mn}-b_mb_n\}_{mn\leq 40}).
\]
Our goal is to solve this system using elimination. For this we record a useful lemma.

\begin{lem}\label{lemma1}
Suppose $l\in\mathbb{P}$ and $n\notin\mathbb{P}$. If $\tfrac{n}{2}<l<n$, then $b_l$ appears linearly in $E_{k,n}$ with
\[
\operatorname{coef}_{b_l}(E_{k,n})=\sigma_{2k-1}(n-l).
\]
In particular, if $p\in\mathbb{P}$ and $p+1\notin\mathbb{P}$, then $b_p$ appears linearly in $E_{k,p+1}$ with $\operatorname{coef}_{b_p}(E_{k,p+1})=1$.
\end{lem}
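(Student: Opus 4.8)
The plan is to expand $E_{k,n}$ as a polynomial in the variables $b_j$ and to verify that $b_l$ occurs in only one of its summands, namely $S_{k,n}$, and there only to the first power. The starting point is the elementary structure of $S_{k,m}$: by definition $S_{k,m}=\sum_{i=1}^{m-1}\sigma_{2k-1}(i)b_{m-i}$ is a linear form in $b_1,\dots,b_{m-1}$ (where $b_1=1$), in which the coefficient of $b_j$ equals $\sigma_{2k-1}(m-j)$ for $1\le j\le m-1$ and vanishes otherwise. Since $n\notin\mathbb{P}$, I fix the coprime factorization $n=mn'$ with $m,n'>1$ that is used to define $E_{k,n}$; as both factors exceed $1$ we have $m\le n/2$ and $n'\le n/2$.

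Next I would walk through the four summands of
\[
E_{k,n}=S_{k,n}-b_mS_{k,n'}-b_{n'}S_{k,m}+\frac{4k}{B_{2k}}\,S_{k,m}S_{k,n'}.
\]
In $S_{k,n}$ the variable $b_l$ occurs exactly once, for the index $i=n-l\in\{1,\dots,n-1\}$ (using $1\le l<n$), contributing the linear term $\sigma_{2k-1}(n-l)\,b_l$. In each of the remaining three summands, every monomial is of the form $b_ab_c$ — possibly collapsing to $b_a$ when an index equals $1$ and $b_1=1$ is absorbed — where the indices range over $a=m$, $1\le c\le n'-1$ for $b_mS_{k,n'}$; $a=n'$, $1\le c\le m-1$ for $b_{n'}S_{k,m}$; and $1\le a\le m-1$, $1\le c\le n'-1$ for $S_{k,m}S_{k,n'}$. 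In every case each index occurring is $\le\max(m,n')\le n/2<l$, so $b_l$ does not occur in these three summands at all. Combining, $b_l$ appears in $E_{k,n}$ only through $S_{k,n}$, it appears linearly, and $\operatorname{coef}_{b_l}(E_{k,n})=\sigma_{2k-1}(n-l)$.

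Finally, for the ``in particular'' clause I would take $n=p+1$, which is not in $\mathbb{P}$ by hypothesis, and $l=p$; since $p\ge 2$ we have $\tfrac{n}{2}=\tfrac{p+1}{2}<p<p+1=n$, so the first part applies and gives $\operatorname{coef}_{b_p}(E_{k,p+1})=\sigma_{2k-1}(n-l)=\sigma_{2k-1}(1)=1$. I do not expect a genuine obstacle in this argument; the only point requiring care is the index bookkeeping in the three product terms, and in particular the observation that collapsing $b_1=1$ can only lower a quadratic monomial of $b_mS_{k,n'}$ or $b_{n'}S_{k,m}$ to one of the variables $b_m$ or $b_{n'}$, whose indices are still $\le n/2<l$. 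This is exactly where the hypothesis $l>n/2$, rather than merely $l<n$, is needed.
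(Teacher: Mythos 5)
Your proposal is correct and follows essentially the same route as the paper: decompose $E_{k,n}$ into $S_{k,n}$ minus the three product terms, observe that every index occurring in those three terms is at most $\max(x,y)\le n/2<l$, and read off the coefficient $\sigma_{2k-1}(n-l)$ from the single occurrence of $b_l$ in $S_{k,n}$. Your extra bookkeeping (including the $b_1=1$ collapse) only makes explicit what the paper leaves implicit.
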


\begin{proof}
Write $n=xy$ with $(x,y)=1$. Then
\[
E_{k,n}=S_{k,n}-\Big(b_xS_{k,y}+b_yS_{k,x}-\tfrac{4k}{B_{2k}}S_{k,x}S_{k,y}\Big).
\]
Since $\max\{x,y\}\leq n/2<l$, the variable $b_l$ cannot occur in $S_{k,x}$, $S_{k,y}$, or $S_{k,x}S_{k,y}$. Hence the only contribution of $b_l$ comes from $S_{k,n}$. Because $\operatorname{coef}_{b_l}(S_{k,n})=\sigma_{2k-1}(n-l)$, the claim follows.
\end{proof}

\medskip\noindent
\textbf{Step 1. Elimination of non-prime-power indices.}
Using the relations $b_{mn}=b_mb_n$, we substitute every $b_{mn}$ with $b_mb_n$, thereby eliminating all polynomials of the form $b_{mn}-b_mb_n$ and removing variables $b_n$ with $n\notin\mathbb{P}$.

\medskip\noindent
\textbf{Step 2. Linear eliminations.}
By Lemma~\ref{lemma1}, $b_p$ appears linearly in $E_{k,p+1}$ whenever $p\in\mathbb{P}$ and $p+1\notin\mathbb{P}$. Solving $E_{k,p+1}=0$ for $b_p$, we can express $b_p$ in terms of variables $\{b_n\}_{n<p}$, substitute this back into the system, and remove both $b_p$ and $E_{k,p+1}$. In addition, $b_8$ appears linearly in $E_{k,15}$, $b_{16}$ in $E_{k,21}$, and $b_{31}$ in $E_{k,34}$, permitting the removal of these pairs as well. Altogether, we eliminate
\begin{align*}
&(b_5, E_{k,6}), (b_8, E_{k,15}), (b_{11}, E_{k,12}), (b_{13}, E_{k,14}), (b_{16}, E_{k,21}), (b_{17}, E_{k,18}), (b_{19}, E_{k,20}), \\
&(b_{23}, E_{k,24}), (b_{25}, E_{k,26}), (b_{27}, E_{k,28}), (b_{29}, E_{k,30}), (b_{31}, E_{k,34}), (b_{32}, E_{k,33}), (b_{37}, E_{k,38}).
\end{align*}
The reduced system now consists of the variables and polynomials
\[
\{b_2, b_3, b_4, b_7\}, \quad
\{E_{k,22}, E_{k,35}, E_{k,36}, E_{k,39}, E_{k,40}\}.
\]
At this point the number of polynomials exceeds the number of unknowns.

\medskip\noindent
\textbf{Step 3. Elimination via resultants.}
The remaining variables no longer appear linearly in any $E_{k,n}$. To proceed, we employ resultants. Define
\begin{align*}
F_{k,35} &= \operatorname{Res}_{b_7}(E_{k,22}, E_{k,35}), &
F_{k,36} &= \operatorname{Res}_{b_7}(E_{k,22}, E_{k,36}), \\
F_{k,39} &= \operatorname{Res}_{b_7}(E_{k,22}, E_{k,39}), &
F_{k,40} &= \operatorname{Res}_{b_7}(E_{k,22}, E_{k,40}).
\end{align*}
This yields a new system
\[
(\{b_2, b_3, b_4\}; \{F_{k,35}, F_{k,36}, F_{k,39}, F_{k,40}\}),
\]
thereby eliminating $b_7$.
Next, set
\begin{align*}
G_{k,36} &= \operatorname{Res}_{b_4}(F_{k,35},F_{k,36}), \\
G_{k,39} &= \operatorname{Res}_{b_4}(F_{k,35},F_{k,39}), \\
G_{k,40} &= \operatorname{Res}_{b_4}(F_{k,35},F_{k,40}).
\end{align*}
Eliminating $b_4$ leaves the system
\[
(\{b_2,b_3\};\{G_{k,36},G_{k,39},G_{k,40}\}).
\]
At first glance, one might try to continue in the same way to eliminate $b_3$. However, computations in \texttt{SageMath} show that
\[
\operatorname{Res}_{b_3}(G_{k,i},G_{k,j})=0\quad\text{for all }i,j\in\{36,39,40\}.
\]

\medskip\noindent
\textbf{Why $\mathrm{Res}_{b_3}(G_{k,i},G_{k,j})=0$.}
For $n\in\{22,35,36,39,40\}$, writing $n=xy$ with $(x,y)=1$ shows that $\min(x,y)<7$. Hence $b_7$ appears linearly in $E_{k,n}$ for all such $n$, so we may write
\[
E_{k,n} = P_{k,n}\,b_7 + Q_{k,n},
\]
where $P_{k,n}$ and $Q_{k,n}$ are independent of $b_7$. Explicit computations in \texttt{SageMath} show that
\[
\deg_{b_4}(P_{k,22})=1,\qquad
\deg_{b_4}(P_{k,n})=2\ (n\in\{35,36,39,40\}),\qquad
\deg_{b_4}(Q_{k,n})=2\ (n\in\{22,35,36,39,40\}).
\]

Accordingly, $E_{k,22}$ takes the form
\[
A = a\,b_7 + b\,b_4^2 + c\,b_4 + d,
\]
while for $n\in\{35,36,39,40\}$ each $E_{k,n}$ can be written uniformly as
\[
B = (a'b_4+e')\,b_7 + (b'b_4^2+c'b_4+d'),
\]
with coefficients independent of $b_4,b_7$.
When comparing two different polynomials of form $B$, we denote them by
\[
B_1 = (a_1'b_4+e_1')\,b_7 + (b_1'b_4^2+c_1'b_4+d_1'),\qquad
B_2 = (a_2'b_4+e_2')\,b_7 + (b_2'b_4^2+c_2'b_4+d_2').
\]

Then any $G_{k,n}$ takes the form
\[
G \;=\; \mathrm{Res}_{b_4}\!\Big(\mathrm{Res}_{b_7}(A,B_1),\ \mathrm{Res}_{b_7}(A,B_2)\Big).
\]
After computing and factoring this expression in \texttt{SageMath}, we find that $a^2$ is a factor of $G$. Therefore, $(P_{k,22})^2$ divides each $G_{k,n}$ for $n\in\{36,39,40\}$.\footnote{This can also be seen from a standard fact: Suppose $A$ is a UFD and $\pi$ is a prime in $A$. Let $f,g \in A[x]$, and let $\bar{f},\bar{g}$ be their images in $(A/(\pi))[x]$. If $d = \deg(\gcd(\bar{f}, \bar{g}))$, then $\pi^d$ divides $\operatorname{Res}_{x}(f,g)$. In our case, take $A=\C[b_2,b_3]$, $\pi = P_{k,22}$, and $F_{k,n} = P_{k,22}Q_{k,n}-Q_{k,22}P_{k,n}\in A[b_4]$ for $n\in\{35,36,39,40\}$. Since $\deg_{b_4}(Q_{k,22})=2$ and $\overline{Q_{k,22}}$ divides each $\overline{F_{k,n}}$ in $A/(\pi)[b_4]$, we have $d\geq 2$. Therefore, $(P_{k,22})^2$ must divide each $G_{k,n}$.}
Finally, since $\deg_{b_3}(P_{k,22})>0$, we have
\[
\mathrm{Res}_{b_3}(G_{k,i},G_{k,j})=0
\quad\text{for all } i,j\in\{36,39,40\}.
\]

\medskip

Accordingly, we define
\begin{align*}
H_{k,36} &= \operatorname{Res}_{b_3}\!\Big(\tfrac{G_{k,39}}{(P_{k,22})^2},\;\tfrac{G_{k,36}}{(P_{k,22})^2}\Big),\\
H_{k,40} &= \operatorname{Res}_{b_3}\!\Big(\tfrac{G_{k,39}}{(P_{k,22})^2},\;\tfrac{G_{k,40}}{(P_{k,22})^2}\Big).
\end{align*}
We then split the problem into two systems:
\[
\text{sys}_{k,1} : (\{b_2\};\ \{H_{k,36},H_{k,40}\}),\qquad
\text{sys}_{k,2} : (\{b_2,b_3,b_4,b_7\};\ \{P_{k,22},Q_{k,22},E_{k,35},E_{k,36},E_{k,39},E_{k,40}\}).
\]

\medskip\noindent
\textbf{Step 4. Specialization to $k=2$.}
For $\text{sys}_{2,1}$, $H_{2,36}$ and $H_{2,40}$ are polynomials in the single variable $b_2$. Computing their $\gcd$ gives
\[
b_2 = -528,\ -288,\ -24,\ -8,\ 18,\ 216.
\]

For $\text{sys}_{2,2}$, define
\begin{align*}
R_{2,22} &= \operatorname{Res}_{b_3}(P_{2,22},Q_{2,22}), \\
R_{2,35} &= \operatorname{Res}_{b_3}(P_{2,22},E_{2,35}), \\
R_{2,36} &= \operatorname{Res}_{b_3}(P_{2,22},E_{2,36}), \\
R_{2,39} &= \operatorname{Res}_{b_3}(P_{2,22},E_{2,39}).
\end{align*}
Next, set
\begin{align*}
T_{2,36} &= \operatorname{Res}_{b_7}(R_{2,35},R_{2,36}), \\
T_{2,39} &= \operatorname{Res}_{b_7}(R_{2,35},R_{2,39}),
\end{align*}
and finally
\begin{align*}
U_{2,36} &= \operatorname{Res}_{b_4}(R_{2,22},T_{2,36}), \\
U_{2,39} &= \operatorname{Res}_{b_4}(R_{2,22},T_{2,39}).
\end{align*}
\texttt{SageMath} computations show that $\gcd(U_{2,36},U_{2,39})=1$, so $\text{sys}_{2,2}$ admits no solutions.

\begin{theorem}\label{thm1}
The solutions in the case $k=2$ for $g(q)$ are the following:
\begin{align*}
\Delta_{12}(q) &= q - 24q^2 + 252q^3 - 1472q^4 + 4830q^5 + \cdots,\\
\Delta_{16}(q) &= q + 216q^2 - 3348q^3 + 13888q^4 + 52110q^5 + \cdots,\\
\Delta_{18}(q) &= q - 528q^2 - 4284q^3 + 147712q^4 - 1025850q^5 + \cdots,\\
\Delta_{22}(q) &= q - 288q^2 - 128844q^3 - 2014208q^4 + 21640950q^5 + \cdots,\\
\varphi_8(q) = \eta(z)^8\eta(2z)^8  &= q - 8q^2 + 12q^3 + 64q^4 - 210q^5 + \cdots,\\
\tfrac{1}{480\pi i}\cdot\tfrac{dE_4}{dz}(q) &= q + 18q^{2} + 84q^{3} + 292q^{4} + 630q^{5} + \cdots,
\end{align*}
where $\eta(z) = q^{\frac{1}{24}}\prod_{n=1}^{\infty}(1-q^n)$, and $\varphi_8(q)$ is the normalized cusp form of level 2 and weight 8, denoted (2.8.a.a) in the online modular form database LMFDB \cite{LMFDB}.\end{theorem}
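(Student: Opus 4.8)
The plan is to combine the elimination of Steps~1--4 with two further ingredients: a \emph{determination} statement, saying that a solution $g$ is recovered from $(b_2,b_3,b_4,b_7)$ alone, and an explicit (quasi)modular realization of each admissible initial segment.

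First I would record what the elimination yields for $k=2$: every solution $g$ gives a point of $\mathrm{sys}_{2,1}$ or $\mathrm{sys}_{2,2}$, the latter being empty since $\gcd(U_{2,36},U_{2,39})=1$, and the former forcing $b_2\in\{-528,-288,-24,-8,18,216\}$. I would then back-substitute each value of $b_2$ up the resultant chain (reinstating the factor $(P_{2,22})^2$ that was cancelled) and run the Step~2 linear relations in reverse, checking in \texttt{SageMath} that each $b_2$ produces a single triple $(b_3,b_4,b_7)$. Granting the determination statement, this leaves at most six solutions.

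For the determination statement I would argue by strong induction on $n$ that $b_n$ is a polynomial in $b_2,b_3,b_4,b_7$. When $n\notin\mathbb{P}$ one uses $b_n=b_xb_y$ for a coprime factorization. When $n=p^j\in\mathbb{P}$ one feeds in the relations $E_{2,n'}=0$ for non-prime-powers $n'$ with $n<n'<2n$: by Lemma~\ref{lemma1} the only $b_\ell$ with $\ell\ge n$ that occur are the prime powers in a short range above $n$, and they occur linearly, so a few such relations form a square linear system in these unknowns whose coefficient matrix has entries among the $\sigma_3(i)$. A ``generic'' prime power is isolated and is pinned down by the single relation $E_{2,n+1}=0$ (in which $b_n$ has coefficient $\sigma_3(1)=1$); the finitely many coupled situations --- the small runs $\{3,4,5\}$, $\{7,8,9\}$, the Mersenne pairs $\{2^j-1,2^j\}$, and the Fermat clusters $\{2^j,2^j+1,\dots\}$ --- each yield an explicit matrix whose determinant is a concrete nonzero integer (for $\{2^j-1,2^j\}$ it is $\sigma_3(2)^2-\sigma_3(1)\sigma_3(3)=53$, independent of $j$). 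Thus $g\mapsto(b_2,b_3,b_4,b_7)$ is injective on solutions, and there are at most six solutions in all.

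It then remains to realize the six values of $b_2$. For $b_2\in\{-24,216,-528,-288\}$ I would take $g=\Delta_{12},\Delta_{16},\Delta_{18},\Delta_{22}$ respectively; since $\dim S_{16}=\dim S_{20}=\dim S_{22}=\dim S_{26}=1$ one gets $E_4\Delta_m=\Delta_{m+4}$, a normalized Hecke eigenform, hence with multiplicative coefficients, while the $\Delta$'s themselves are eigenforms with the listed expansions. For $b_2=18$ take $g=\frac{1}{480\pi i}\frac{dE_4}{dz}=\sum_{n\ge1}n\sigma_3(n)q^n$; then, using $E_4^2=E_8$, one has $E_4g=\frac{1}{960\pi i}\frac{dE_8}{dz}=\sum_{n\ge1}n\sigma_7(n)q^n$, whose coefficients are manifestly multiplicative (this $g$ is quasimodular rather than modular). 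For $b_2=-8$ take $g=\varphi_8=\eta(z)^8\eta(2z)^8$; then $E_4\varphi_8$ lies in the two-dimensional space $S_{12}(\Gamma_0(2))=\langle\Delta_{12}(z),\Delta_{12}(2z)\rangle$, and comparing the first two coefficients identifies it as $\Delta_{12}(z)+256\,\Delta_{12}(2z)$ --- not itself a Hecke eigenform, but its $n$-th coefficient is $\tau(n)$ for odd $n$ and $\tau(2^j)+256\,\tau(2^{j-1})$ for $n=2^j$, from which multiplicativity follows at once. Since the six listed series have exactly these values of $b_2$, the determination statement forces them to be all the solutions.

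The step I expect to be the main obstacle is the determination statement: one must choose the order of the induction so that each coupled cluster is resolved before the prime powers that would otherwise leak into it, and confirm that every associated linear system is nonsingular --- straightforward once set up correctly, because all matrix entries are explicit values of $\sigma_3$, but it is the part that needs care. A lesser point is verifying the \texttt{SageMath} back-substitution yields a single $(b_3,b_4,b_7)$ for each admissible $b_2$, so that precisely six initial tuples survive.
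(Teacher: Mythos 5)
Your proposal is correct and follows essentially the same route as the paper: elimination to pin down the six admissible initial tuples, a uniqueness lemma proved by induction (handling the Mersenne/Fermat clusters via small nonsingular $\sigma$-matrices), and the identical realizations $E_4\Delta_m=\Delta_{m+4}$, $E_4\varphi_8=\Delta_{12}(z)+256\Delta_{12}(2z)$, and $2E_4\,E_4'=E_8'$. The only divergence is that your determination statement starts from $(b_2,b_3,b_4,b_7)$ rather than the paper's first eight coefficients, which forces you to resolve the coupled cluster $b_8,b_9,b_{11},b_{13}$ at the outset — a point you correctly flag, and which the paper avoids by assuming $b_n$ for $n\le 8$ and inducting from $n=9$.
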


Before proving the theorem, we require the following lemma.

\begin{lem}\label{lemma2}
If $g(q) = q + \sum_{n=2}^\infty b_nq^n$ is such that both $g(q)$ and $E_{2k}(q)g(q)$ are multiplicative, then $g(q)$ is uniquely determined by its first $8$ coefficients.
\end{lem}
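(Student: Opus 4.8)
The plan is to show that the multiplicativity of both $g(q)$ and $E_{2k}(q)g(q)$ forces enough linear relations among the $b_n$ that everything is determined once $b_2, b_3, b_4, b_5, b_6, b_7, b_8$ (equivalently, by multiplicativity, $b_2,b_3,b_4,b_5,b_7,b_8$) are fixed. First I would note that for $n \notin \mathbb{P}$, multiplicativity of $g$ already gives $b_n = b_m b_{n/m}$ for any coprime factorization, so only the prime-power coefficients $b_{p^i}$ are genuine degrees of freedom. For the prime-power coefficients, I would invoke Lemma~\ref{lemma1}: for each prime power $l$ with $l > 8$, I want to exhibit an index $n \notin \mathbb{P}$ with $n/2 < l < n$ such that the relation $E_{k,n} = 0$ expresses $b_l$ in terms of $b_m$ with $m < l$. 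The point is that between consecutive constraints one climbs up the prime powers one at a time, so by strong induction on $l$ every $b_l$ with $l > 8$ is a polynomial function of $b_2,\dots,b_8$.

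The key combinatorial input is therefore: for every prime power $l \ge 9$, there exists $n \notin \mathbb{P}$ with $n/2 < l < n$. Taking $n = l+1$ works whenever $l+1 \notin \mathbb{P}$, and indeed most integers just above a prime power are not themselves prime powers. The finitely many exceptions — where $l$ and $l+1$ are both prime powers, e.g.\ $l = 8$ (but $8 < 9$, so irrelevant), or Catalan-type pairs like $(l, l+1)$ with both prime powers — must be handled individually. For $l \ge 9$, by Mih\u{a}ilescu's theorem the only consecutive perfect powers are $8$ and $9$; and one can check directly that for $l \in \{9, \dots\}$ either $l+1$ or some other $n$ in the window $(l, 2l)$ lies outside $\mathbb{P}$. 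Concretely, for $l \ge 9$ one always finds such an $n$ because any interval $(l, 2l)$ with $l \ge 9$ contains a non-prime-power (in fact contains many), and for $l+1$ specifically only $l = 8$ fails among $l \ge 8$. Here it is cleanest to just say: take $n = l+1$ if $l+1 \notin \mathbb{P}$; otherwise $l$ and $l+1$ are consecutive prime powers, which by Mih\u{a}ilescu forces $\{l, l+1\} = \{8, 9\}$, and since $l \ge 9$ we would need $l = 9$, $l+1 = 10 \notin \mathbb{P}$ — contradiction — so $n = l+1$ always works for $l \ge 9$.

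With that combinatorial fact in hand, the induction is immediate: assuming $b_m$ is determined by $b_2,\dots,b_8$ for all $m < l$, the relation $E_{k, l+1} = 0$ (with coefficient of $b_l$ equal to $\sigma_{2k-1}(1) = 1$ by Lemma~\ref{lemma1}) determines $b_l$, and then multiplicativity determines all $b_n$ for $l < n < $ the next prime power. Finally, since $b_2,\dots,b_8$ are themselves not all independent — $b_4, b_8$ are powers of $b_2$ up to the prime-power freedom, actually $b_4, b_6, b_8$ are constrained by multiplicativity once $b_2$ is known only for $b_6 = b_2 b_3$, while $b_4$ and $b_8$ remain free — one concludes that $g(q)$ is determined by the six values $b_2, b_3, b_4, b_5, b_7, b_8$, hence certainly by its first eight coefficients $b_1, \dots, b_8$.

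The main obstacle I anticipate is purely bookkeeping: making sure the window condition $n/2 < l < n$ together with $n = l+1 \notin \mathbb{P}$ genuinely covers all prime powers $l \ge 9$, and citing the right elementary fact (consecutive prime powers) to dispatch the single borderline case cleanly rather than by an ad hoc case analysis. Everything else — the reduction to prime-power coefficients via multiplicativity of $g$, and the linear-solvability via Lemma~\ref{lemma1} — is routine once that combinatorial lemma is stated.
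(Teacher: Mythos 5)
There is a genuine gap, and it is exactly at the point you flagged as the ``single borderline case.'' Your argument rests on the claim that for $l \ge 9$ a prime power, $l+1$ is never a prime power, which you justify by Mih\u{a}ilescu's theorem. But Mih\u{a}ilescu's theorem concerns consecutive \emph{perfect powers} $x^p, y^q$ with exponents at least $2$, whereas $\mathbb{P}$ here is the set of \emph{prime powers} $p^n$ with $n \ge 1$, which includes the primes themselves. Consecutive prime powers beyond $(8,9)$ are plentiful: $(16,17)$, $(31,32)$, $(127,128)$, $(256,257)$, and more generally any pair $(2^n-1,2^n)$ with $2^n-1$ a Mersenne prime or $(2^{2^m}, 2^{2^m}+1)$ with $2^{2^m}+1$ a Fermat prime. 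Your induction therefore already breaks at $l=16$: the relation $E_{k,17}$ does not exist (since $17\in\mathbb{P}$), so $b_{16}$ is not determined by your scheme, and it cannot be repaired by citing a finiteness result since infinitely many such pairs may exist.

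The paper's proof confronts this case head-on. When $l,l+1\in\mathbb{P}$, one of $l$ or $l+1$ is a power of $2$ (this is the correct elementary fact, cited from Larsen), so either $l$ is a Mersenne prime or $l+1$ is a Fermat prime. In the Mersenne case, $l+2$ and $l+3$ are both shown to lie outside $\mathbb{P}$, and Lemma~\ref{lemma1} gives a $2\times 2$ linear system in $(b_l, b_{l+1})$ from $E_{k,l+2}=E_{k,l+3}=0$ whose determinant $2^{2k}+2^{4k-2}-3^{2k-1}$ is nonzero. In the Fermat case one may additionally have $l+3\in\mathbb{P}$, and then a $3\times 3$ system in $(b_l,b_{l+1},b_{l+3})$ coming from $E_{k,l+2},E_{k,l+4},E_{k,l+5}$ is used, with a determinant estimate showing nonvanishing for all $k\ge 2$. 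This linear-algebra step over several simultaneous relations is the essential content of the lemma and is entirely missing from your proposal. The rest of your outline (reduction to prime-power indices via multiplicativity of $g$, linear solvability via Lemma~\ref{lemma1} when $l+1\notin\mathbb{P}$) matches the paper and is fine.
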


\begin{proof}
We proceed by induction on $l$.

\noindent\textbf{Base step.} For $l=9$, the coefficient $b_{9}$ appears linearly in $E_{k,10}$, and all other coefficients involved have index at most $9$. Thus $b_{9}$ can be expressed in terms of $\{b_n\}_{n\leq 8}$.

\noindent\textbf{Inductive step.} Assume $b_m$ is determined by $\{b_n\}_{n\leq 8}$ for all $m<l$. We distinguish three cases.

\smallskip
\emph{Case 1: $l\notin\mathbb{P}$.} Then $l=xy$ with $(x,y)=1$, so $b_l=b_xb_y$ reduces to smaller indices.

\emph{Case 2: $l\in\mathbb{P}$ but $l+1\notin\mathbb{P}$.} By Lemma~\ref{lemma1}, $b_l$ appears linearly in $E_{k,l+1}$, so $b_l$ is determined by $\{b_n\}_{n<l}$.

\emph{Case 3: $l,l+1\in\mathbb{P}$.} Then either $l$ is a Mersenne prime or $l+1$ is a Fermat prime \cite[Lemma~4.3]{Larsen}.

\quad($l$ Mersenne).
Then $3\,|\,l+2$, and $l+3$ is an even number between two consecutive powers of $2$, so $l+2,l+3\notin\mathbb{P}$. By Lemma~\ref{lemma1}, both $b_l$ and $b_{l+1}$ appear linearly in $E_{k,l+2}$ and $E_{k,l+3}$, with coefficient matrix
\[
\begin{bmatrix}
\sigma_{2k-1}(2) & \sigma_{2k-1}(3) \\
1 & \sigma_{2k-1}(2)
\end{bmatrix}
\]
whose determinant $2^{2k}+2^{4k-2}-3^{2k-1}$ is nonzero. Thus $b_l,b_{l+1}$ are determined.

\quad($l+1$ Fermat).
Then $l+2,l+4$ are even numbers between two consecutive powers of $2$, and $l+5$ is divisible by $3$ and $\equiv 3 \pmod 8$. If $l+5\in\PP$, then $l+5$ would be a perfect square between $l$ and $(\sqrt l+1)^2$, which is impossible. Thus $l+2,l+4,l+5\notin\PP$. If $l+3\notin\PP$, then $E_{k,l+2},E_{k,l+3}$ suffice to solve for $b_l,b_{l+1}$. If $l+3\in\PP$, then $b_l,b_{l+1},b_{l+3}$ appear linearly in $E_{k,l+2},E_{k,l+4},E_{k,l+5}$, with coefficient matrix
\[
\begin{bmatrix}
\sigma_{2k-1}(2) & \sigma_{2k-1}(4) & \sigma_{2k-1}(5)\\
1 & \sigma_{2k-1}(3)& \sigma_{2k-1}(4)\\
0 & 1 & \sigma_{2k-1}(2)
\end{bmatrix}
\]
whose determinant is 
\[
\begin{aligned}
    &\sigma_{2k-1}(2)(\sigma_{2k-1}(2)\sigma_{2k-1}(3) - 2\sigma_{2k-1}(4)) + \sigma_{2k-1}(5)\\
    > &\sigma_{2k-1}(2)(6^{2k-1}-2\cdot4^{2k-1})\\
    > &0
\end{aligned}
\] for all $k\geq 2$. Hence $b_l,b_{l+1},b_{l+3}$ are determined.
\end{proof}

\begin{proof}[Proof of Theorem~\ref{thm1}]
We first verify that all six series listed above are solutions. Since
\[
\dim M_4 = \dim S_{12} = \dim S_{16} = \dim S_{18} = \dim S_{20} = \dim S_{22} = \dim S_{26} = 1,
\]
it follows that
\begin{align*}
E_4(q)\Delta_{12}(q) &= \Delta_{16}(q),\\
E_4(q)\Delta_{16}(q) &= \Delta_{20}(q),\\
E_4(q)\Delta_{18}(q) &= \Delta_{22}(q),\\
E_4(q)\Delta_{22}(q) &= \Delta_{26}(q).
\end{align*}
Thus the first four series are solutions. For \(\varphi_8(q)\), Since 
\(E_4\in M_4(\mathrm{SL}_2(\mathbb Z))\) and \(\varphi_8\in S_8(\Gamma_0(2))\),
we have
\[
E_4(q)\,\varphi_8(q)\in S_{12}(\Gamma_0(2)).
\]
Consider \(\Delta_{12}(q)+256\,\Delta_{12}(q^2)\), which also lies in
\(S_{12}(\Gamma_0(2))\). A direct check of the Fourier expansions shows that
the coefficients of \(q,q^2,q^3\) in \(E_4(q)\varphi_8(q)\) and in \(\Delta_{12}(q) + 256\Delta_{12}(q^2)\)
agree. The Sturm bound \footnote{The Sturm bound \cite{Sturm} is an upper bound on the least index where the coefficients of the Fourier expansions of distinct modular forms in the same space $M_k(N,\chi)$ must differ. More precisely, the Sturm bound for $M_k(N,\chi)$ is $\lfloor \frac{km}{12} \rfloor$, where $m=[\SL_2(\Z)\colon\Gamma_0(N)] = N\prod_{p|N}(1+\frac{1}{p})$.} for \(S_{12}(\Gamma_0(2))\) is
\[
\Bigl\lfloor\frac{12}{12}\,[\mathrm{SL}_2(\mathbb Z):\Gamma_0(2)]\Bigr\rfloor
=\lfloor 1\cdot 3\rfloor=3,
\]
since \([\mathrm{SL}_2(\mathbb Z):\Gamma_0(2)]=2\bigl(1+\tfrac12\bigr)=3\).
Hence the two cusp forms are equal:
\[
E_4(q)\,\varphi_8(q)=\Delta_{12}(q)+256\,\Delta_{12}(q^2).
\]
Since $\Delta_{12}(q)$ is multiplicative, so does $\Delta_{12}(q) + 256\Delta_{12}(q^2)$ and therefore $\varphi_8(q)$ is a solution.
Finally, using $E_4(q)^2=E_8(q)$ and differentiating with respect to $z$ \footnote{$q = e^{2\pi i z}.$}, we obtain
\[
2E_4(q)\,\tfrac{dE_4}{dz}(q) = 2\pi i\,qE_8'(q).
\]
As $qE_8'(q)$ is multiplicative, it follows that $\tfrac{dE_4}{dz}(q)$ is also a solution after rescaling.

Next, we show that these are the only solutions for $k=2$. Substituting
\[
b_2 = -528,\ -288,\ -24,\ -8,\ 18,\ 216
\]
into $G_{2,36}$ and $G_{2,39}$ and computing $\gcd(G_{2,36},G_{2,39})$ yields
\[
b_3 = -4284,\ -128844,\ 252,\ 12,\ 84,\ -3348.
\]
Proceeding in this way produces the following table of coefficients:

\begin{table}[H]
\centering
\caption{Initial coefficients of the six solutions for $k=2$}
\label{tab:coeffs}
\begin{tabular}{|l|r|r|r|r|r|}
\hline
 & $b_{2}$ & $b_{3}$ & $b_{5}$ & $b_{7}$ & $b_{8}$   \\ \hline
$g_1(q)$ & $-24$ & $252$ & $4830$ & $-16744$ & $84480$ \\ \hline
$g_2(q)$ & $216$ & $-3348$ & $52110$ & $2822456$ & $-4078080$ \\ \hline
$g_3(q)$ & $-528$ & $-4284$ & $-1025850$ & $3225992$ & $-8785920$ \\ \hline
$g_4(q)$ & $-288$ & $-128844$ & $21640950$ & $-768078808$ & $1184071680$ \\ \hline
$g_5(q)$ & $-8$ & $12$ & $-210$ & $1016$ & $-512$ \\ \hline
$g_6(q)$ & $18$ & $84$ & $630$ & $2408$ & $4680$ \\ \hline
\end{tabular}
\end{table}

We observe that $g_1(q),\dots,g_6(q)$ agree with the six series identified above in their first eight coefficients. By Lemma~\ref{lemma2}, the agreement on the first eight coefficients implies that the series coincide. Hence the six series in Theorem~\ref{thm1} exhaust all solutions for $k=2$.
\end{proof}

By applying the same method for $3 \leq k \leq 20$, we obtain the following identities.

\begin{theorem}\label{theorem 2}
The solutions for $f(q)\in\{E_{2k}(q)\colon 3\leq k \leq 20\}$ are given by
\begin{align*}
E_6(q)\Delta_{12}(q) &= \Delta_{18}(q),\\
E_6(q)\Delta_{16}(q) &= \Delta_{22}(q),\\
E_6(q)\Delta_{20}(q) &= \Delta_{26}(q),\\
E_8(q)\Delta_{12}(q) &= \Delta_{20}(q),\\
E_8(q)\Delta_{18}(q) &= \Delta_{26}(q),\\
E_{10}(q)\Delta_{12}(q) &= \Delta_{22}(q),\\
E_{10}(q)\Delta_{16}(q) &= \Delta_{26}(q),\\
E_{14}(q)\Delta_{12}(q) &= \Delta_{26}(q).
\end{align*}
\end{theorem}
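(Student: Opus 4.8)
The plan is to repeat, for each $k\in\{3,4,\dots,20\}$, the same four-step elimination carried out in the proof of Theorem~\ref{thm1}. Steps~1--3 go through unchanged, being uniform in $k$: Lemma~\ref{lemma1} shows that $b_p$ occurs with coefficient $1$ in $E_{k,p+1}$ whenever $p\in\mathbb{P}$ and $p+1\notin\mathbb{P}$, and that $b_8,b_{16},b_{31}$ occur with coefficients $\sigma_{2k-1}(7),\sigma_{2k-1}(5),\sigma_{2k-1}(3)$ (all nonzero) in $E_{k,15},E_{k,21},E_{k,34}$, so for every $k$ the linear eliminations of Step~2 collapse the system to $(\{b_2,b_3,b_4,b_7\};\{E_{k,22},E_{k,35},E_{k,36},E_{k,39},E_{k,40}\})$. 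The degree and divisibility facts of Step~3 ($\deg_{b_4}P_{k,22}=1$, $\deg_{b_4}P_{k,n}=\deg_{b_4}Q_{k,n}=2$ for the remaining $n$, and $(P_{k,22})^2\mid G_{k,n}$) are symbolic identities in the data $\{\sigma_{2k-1}(i)\}_i$ and $\tfrac{4k}{B_{2k}}$, and hence survive specialization; only the resultants $F_{k,n},G_{k,n},H_{k,36},H_{k,40}$ and the subsystem $\text{sys}_{k,2}$ have to be recomputed numerically for each $k$.

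For the existence direction, each product $E_{2k}(q)\Delta_m(q)$ appearing in the list is a cusp form of weight $m+2k$ for $\SL_2(\Z)$, since $E_{2k}$ has constant term $1$ and $\Delta_m$ is a cusp form. In every listed case $m\in\{12,16,18,20\}$ and $m+2k\in\{18,20,22,26\}$; as $\dim S_{18}=\dim S_{20}=\dim S_{22}=\dim S_{26}=1$, the form $E_{2k}(q)\Delta_m(q)$ is a scalar multiple of $\Delta_{m+2k}(q)$, and comparing the coefficient of $q$---which equals $1$ on both sides---forces $E_{2k}(q)\Delta_m(q)=\Delta_{m+2k}(q)$. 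Since $\Delta_{m+2k}(q)$ is a normalized Hecke eigenform it is multiplicative, so each listed $g(q)=\Delta_m(q)$ is indeed a solution. (As in the proof of Theorem~\ref{thm1} one may instead invoke the Sturm bound, which here is trivial because the ambient space is one-dimensional.)

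For completeness, I would carry out Step~4 for each $k\in\{3,\dots,20\}$: $H_{k,36}$ and $H_{k,40}$ are polynomials in $b_2$ alone, $\gcd(H_{k,36},H_{k,40})$ produces a finite list of candidate values of $b_2$, and back-substitution---first $b_3$ from $\gcd(G_{k,36},G_{k,39})$, then $b_4$ from $F_{k,35}$, then $b_7$ from $E_{k,22}=P_{k,22}b_7+Q_{k,22}$---propagates each candidate to a tuple $(b_2,b_3,b_4,b_7)$, hence to all $b_n$ with $n\le 8$, which is then tested against every equation in $\{E_{k,n}\}_{n\le 40}$; the subsystem $\text{sys}_{k,2}$ is handled by the analogous chain of resultants (the analogues of $R_{2,\bullet},T_{2,\bullet},U_{2,\bullet}$) and is verified to be inconsistent. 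By Lemma~\ref{lemma2} any solution $g(q)$ is determined by its first eight coefficients, so the surviving tuples are in bijection with the solutions. For $k\in\{3,4,5,7\}$ they match---coefficient by coefficient, just as in the table in the proof of Theorem~\ref{thm1}---the products $E_{2k}(q)\Delta_m(q)=\Delta_{m+2k}(q)$ above and nothing else; for $k\in\{6\}\cup\{8,9,\dots,20\}$ no tuple survives all the equations $\{E_{k,n}\}_{n\le 40}$ and $\text{sys}_{k,2}$ is inconsistent, so there is no solution. In particular, in contrast with the case $k=2$, no purely quasimodular solution appears for $3\le k\le 20$. This yields exactly the list of identities claimed.

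The main obstacle is computational rather than conceptual: as $k$ grows the rationals $\tfrac{4k}{B_{2k}}$ and $\sigma_{2k-1}(i)$ have rapidly increasing height, so the resultants $G_{k,n}$, $H_{k,\bullet}$ and the $\text{sys}_{k,2}$ chain acquire very large coefficients, and each of the eighteen values of $k$ needs its own \texttt{SageMath} run---including a fresh check that no leading coefficient relied upon in Step~3 degenerates and that $\text{sys}_{k,2}$ remains inconsistent. There is no uniformity in $k$ available by this method; securing such uniformity for large $k$ is precisely what the specialization and Newton-polygon arguments of the following sections accomplish, and here the finite range $3\le k\le 20$ is simply dispatched by direct computation.
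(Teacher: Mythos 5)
Your proposal is correct and follows essentially the same route as the paper, whose entire argument for this theorem is ``apply the same method as for $k=2$'': existence via the one-dimensional spaces $S_{18},S_{20},S_{22},S_{26}$, and completeness via the per-$k$ elimination together with Lemma~\ref{lemma2}. Your added caveat that the degree and nonvanishing conditions must be re-verified for each specialized $k$ is the right one, and for a fixed numeric $k$ these are direct computations, so there is no gap.
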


\section{Extreme monomials and Newton polygons}

We are going to solve the equation system for $k$ large. When $k$ is large, it is hard to compute each resultant directly, but we can determine the degree of the relevant resultants in another way.

\begin{definition}[Extreme monomials of a multivariate polynomial]
Let
\[
f(x_1,\dots,x_n)
= \sum_{\alpha \in \mathbb{Z}_{\ge 0}^n} c_\alpha x^\alpha,
\qquad x^\alpha = x_1^{\alpha_1}\cdots x_n^{\alpha_n}.
\]
Say $(\beta_1,\cdots,\beta_n)>(\alpha_1,\cdots,\alpha_n)$ if $\beta_i \geq \alpha_i$ for all $i=1,2,\cdots,n$ and $\beta_i > \alpha_i$ for at least one $i$. We say $x^\alpha = x_1^{\alpha_1}\cdots x_n^{\alpha_n}$ is an \emph{extreme monomial} of $f$ if $c_\alpha \neq 0$ and whenever $\beta>\alpha$ we have $c_\beta = 0$.
\end{definition}

Suppose we have two multivariate polynomials $f,g\in\C[x_1,x_2,\cdots,x_n]$ with generic coefficients. Then, by the definition of resultants, the extreme monomials of $\operatorname{Res}_{x_n}(f,g)$ are determined by the extreme monomials of $f$ and $g$.

\begin{definition}[Newton polygon / polytope]
Let
\[
f(x_1,\dots,x_n)
= \sum_{\alpha \in \mathbb{Z}_{\ge 0}^n} c_\alpha x^\alpha,
\qquad x^\alpha = x_1^{\alpha_1}\cdots x_n^{\alpha_n},
\]
be a nonzero polynomial over a field (or ring), where only finitely many coefficients
\(c_\alpha\) are nonzero. The \emph{Newton polygon} (or more generally the
\emph{Newton polytope}) of \(f\) is the convex hull in \(\mathbb{R}^n\)
of the exponent vectors of nonzero monomials:
\[
N(f) := \operatorname{Conv}\bigl(\{\alpha \in \mathbb{Z}_{\ge0}^n : c_\alpha \neq 0\}\bigr).
\]
In the bivariate case ($n=2$), this convex hull is a planar polygon in $\mathbb{R}^2$,
and is traditionally called the \emph{Newton polygon} of $f(x,y)$.
\end{definition}

For example, let
\[
f(x,y)= 3x^4y^2 + 2x^3 + 5xy^3 + 7y^2 + x^2y + 1.
\]
Then the Newton polygon $N(f)$ is shown below.

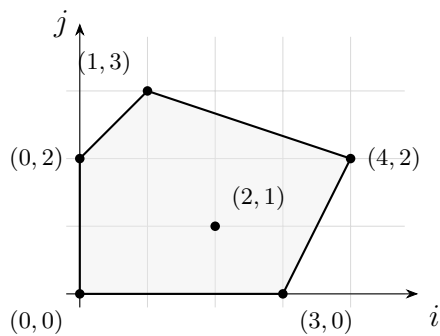
\begin{figure}[ht]
\centering
\begin{tikzpicture}[scale=0.9, >=Stealth]
 
  \draw[step=1,very thin,gray!30] (-0.2,-0.2) grid (4.8,3.8);
  \draw[->] (-0.2,0) -- (5.0,0) node[below right] {$i$};
  \draw[->] (0,-0.2) -- (0,4.0) node[left] {$j$};

  \filldraw[fill=gray!15, fill opacity=0.4, draw=black, thick]
    (0,0) -- (0,2) -- (1,3) -- (4,2) -- (3,0) -- cycle;

  \fill (0,0) circle (2pt) node[below left=2pt, font=\scriptsize] {$(0,0)$};
  \fill (0,2) circle (2pt) node[left=2pt, font=\scriptsize] {$(0,2)$};
  \fill (1,3) circle (2pt) node[above left=2pt, font=\scriptsize] {$(1,3)$};
  \fill (2,1) circle (2pt) node[above right=2pt, font=\scriptsize] {$(2,1)$};
  \fill (3,0) circle (2pt) node[below right=2pt, font=\scriptsize] {$(3,0)$};
  \fill (4,2) circle (2pt) node[right=2pt, font=\scriptsize] {$(4,2)$};

\end{tikzpicture}
\caption{The support of $f(x,y)$ and its Newton polygon, with lattice points labeled by coordinates.}
\end{figure}

The extreme monomials of $f$ are
\[
x^4y^2, \quad x^3, \quad xy^3, \quad y^2.
\]

\begin{definition}[Minkowski sum]
Let $P,Q \subset \mathbb{R}^2$. Their \emph{Minkowski sum} is
\[
P+Q := \{\, p+q \mid p\in P,\ q\in Q \,\}.
\]
If $P$ and $Q$ are convex polygons, then $P+Q$ is also a convex polygon.
\end{definition}

\begin{theorem}[Bernstein theorem, bivariate case] \cite{Minding}\cite{Bernstein}\cite{GKZ} \footnote{This bivariate case of Bernstein theorem was first proved by Minding \cite{Minding} in 1841. Bernstein generalized this result in 1975, see \cite{Bernstein}\cite[Theorem 2.8]{GKZ}.}
Let $f(x,y)$ and $g(x,y)$ be two nonzero polynomials with Newton polygons
$P = N(f)$ and $Q = N(g)$. Then the number of isolated common zeros in
$(\mathbb{C}^\ast)^2 := (\mathbb{C}\setminus\{0\})^2$, counted with multiplicity, satisfies
\[
\#\{(x,y)\in(\mathbb{C}^\ast)^2 : f(x,y)=g(x,y)=0\}
\;\le\;
\operatorname{Area}(P+Q) - \operatorname{Area}(P) - \operatorname{Area}(Q).
\]
Moreover, this bound is attained for generic coefficients.
\end{theorem}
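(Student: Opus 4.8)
The plan is to realize the two curves $\{f=0\}$ and $\{g=0\}$ inside a smooth complete toric surface and extract the count from toric intersection theory. First I would normalize: multiplying $f$ and $g$ by Laurent monomials translates $P$ and $Q$ independently and changes neither side of the inequality (the right-hand side, being a mixed area, is invariant under independent translations), so I may assume $0\in P$ and $0\in Q$. Choose a smooth complete fan $\Sigma$ in $\mathbb R^2$ refining both normal fans $\mathcal N(P)$ and $\mathcal N(Q)$; such a $\Sigma$ exists because any complete two-dimensional fan can be subdivided into unimodular cones. Let $X=X_\Sigma$ be the associated smooth projective toric surface, with dense torus $(\mathbb{C}^\ast)^2$ and boundary $X\setminus(\mathbb{C}^\ast)^2=\bigcup_\rho D_\rho$ a union of torus-invariant copies of $\mathbb P^1$, one per ray $\rho$ of $\Sigma$.

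The toric dictionary attaches to $P$ and $Q$ nef Cartier divisor classes $D_P$ and $D_Q$ on $X$ with $D_{P+Q}=D_P+D_Q$ and $D_R^2=2\operatorname{Area}(R)$ for every lattice polygon $R$ whose normal fan is refined by $\Sigma$. Expanding $2\operatorname{Area}(P+Q)=D_{P+Q}^2=(D_P+D_Q)^2=D_P^2+2\,D_P\!\cdot\!D_Q+D_Q^2$ yields
\[
D_P\cdot D_Q=\operatorname{Area}(P+Q)-\operatorname{Area}(P)-\operatorname{Area}(Q),
\]
the right-hand side of the theorem. The closures $\overline{Z(f)},\overline{Z(g)}\subset X$ of the torus zero loci satisfy $\overline{Z(f)}+E_f\sim D_P$ and $\overline{Z(g)}+E_g\sim D_Q$ for effective torus-invariant divisors $E_f,E_g$ (in fact $E_f=E_g=0$ when $f,g$ have Newton polygons exactly $P,Q$, since the edge polynomials are nonzero). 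After factoring out any common factor of $f,g$ in $\mathbb{C}[x^{\pm1},y^{\pm1}]$ — which only shrinks the left-hand side, and by monotonicity of mixed area does not enlarge the right-hand side — the divisors $\overline{Z(f)},\overline{Z(g)}$ share no component, so, using positivity of local intersection multiplicities, linear equivalence, and nefness of $D_P$,
\[
\#\{(x,y)\in(\mathbb{C}^\ast)^2:f=g=0\}\ \le\ \overline{Z(f)}\cdot\overline{Z(g)}\ \le\ D_P\cdot\overline{Z(g)}\ \le\ D_P\cdot D_Q .
\]

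For sharpness I would fix the supports of $f,g$ to be $P\cap\mathbb Z^2$ and $Q\cap\mathbb Z^2$ and let the coefficients vary, checking three genericity conditions: (a) $f,g$ have no common factor; (b) for each ray $\rho$ of $\Sigma$ with primitive inner normal $v_\rho$, the restrictions $f|_{D_\rho},g|_{D_\rho}$ — which are the initial forms $\operatorname{in}_{v_\rho}f,\operatorname{in}_{v_\rho}g$ viewed as Laurent polynomials on $D_\rho\cong\mathbb P^1$ — have no common zero; (c) every common zero of $f,g$ in $(\mathbb{C}^\ast)^2$ is transverse. Each of (a), (b), (c) is the non-vanishing of some polynomial (a resultant, a facial resultant, a Jacobian discriminant) in the coefficients, hence holds on a dense open set; on their intersection we have $\overline{Z(f)}=D_P$, $\overline{Z(g)}=D_Q$, the two curves meet only inside the torus and only transversally, and so the first and last quantities in the displayed chain coincide, giving equality.

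The main obstacle is condition (b): one must verify that for every ray of $\Sigma$ — equivalently, every edge of the Minkowski sum $P+Q$ — the corresponding facial subsystem is inconsistent in the torus for generic coefficients, and that this is a genuinely nonempty open condition rather than an identically-vanishing one. This is exactly the combinatorial heart of Bernstein's theorem (and, for the triangle case, of Minding's 1841 argument): solutions of $f=g=0$ can escape to the boundary divisor $D_\rho$ only along the one-parameter subgroup $t\mapsto(t^{a},t^{b})$ with $(a,b)$ spanning $\rho$, and one must show generically none do. One also needs the standard toric facts quoted above — smooth refinement of a complete planar fan, the polygon-to-divisor correspondence, and $D_R^2=2\operatorname{Area}(R)$ — whose proofs I would cite. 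A route closer to the cited sources replaces $X_\Sigma$ by a direct continuation argument: track the isolated torus zeros of a generic pair $(f_t,g_t)$ as $t\to0$, prove the count is Minkowski-additive, $\nu(P'+P'',Q)=\nu(P',Q)+\nu(P'',Q)$, and pin it down on standard simplices via Bézout; but the bookkeeping of escaping solutions versus edges of the Newton polygons is the same crux in either presentation.
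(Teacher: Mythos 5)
The paper does not prove this statement at all: it is imported as a classical theorem with citations to Minding, Bernstein, and Gelfand--Kapranov--Zelevinsky, so there is no internal proof to compare yours against. Your toric-intersection argument is the standard modern proof of the bivariate Bernstein bound, and the outline is correct: translating $P,Q$ to contain the origin, passing to a smooth complete refinement $\Sigma$ of both normal fans, identifying the right-hand side with $D_P\cdot D_Q$ via $D_R^2=2\operatorname{Area}(R)$, and running the chain of inequalities using effectivity of the boundary corrections and nefness of $D_P,D_Q$ all check out. Your handling of common factors (local multiplicities at isolated zeros are unchanged since the common factor is a unit there, and mixed area is monotone under shrinking the polygons) is also sound.

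The one place you stop short is the genericity of your condition (b), which you correctly identify as the crux. To close it: for a ray $\rho$ with primitive generator $u$, the facial forms $\operatorname{in}_{v_\rho}f$ and $\operatorname{in}_{v_\rho}g$ are supported on the faces of $P$ and $Q$ minimizing $\langle\cdot,v_\rho\rangle$; after the monomial change of coordinates adapted to the one-parameter subgroup through $u$, each becomes a univariate Laurent polynomial on $D_\rho\cong\mathbb P^1$. If either face is a vertex, the corresponding form is a monomial with no torus zeros and (b) holds identically; if both are edges, the two univariate polynomials have a nonvanishing resultant for generic coefficients because their coefficient sets are disjoint subsets of the free parameters. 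So (b) is a nonempty open condition for every $\rho$, and similarly (a) and (c) are cut out by not-identically-zero polynomials in the coefficients. With that paragraph added, and a citation for the standard toric facts (existence of smooth refinements in dimension two, the polytope--divisor dictionary, and $D_R^2=2\operatorname{Area}(R)$), your proposal is a complete and correct proof; it is in fact more informative than what the paper provides, which is nothing beyond the references.
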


Since each solution $(a,b)\in(\mathbb{C}^\ast)^2$ contributes a root to
$\operatorname{Res}_y(f,g) \in \mathbb{C}[x]$, we obtain:

\begin{corollary}
Let $f(x,y)$ and $g(x,y)$ be two bivariate polynomials with Newton polygons $P$ and $Q$. Then
\[
\deg_x\operatorname{Res}_y(f,g)\;\le\;
\operatorname{Area}(P+Q) - \operatorname{Area}(P) - \operatorname{Area}(Q),
\]
with equality for generic coefficients.
\end{corollary}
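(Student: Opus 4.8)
The plan is to read off $\deg_x\operatorname{Res}_y(f,g)$ from the common zero locus of $f$ and $g$ after eliminating $y$, and then to evaluate that count using the Bernstein theorem. First I would view $f,g$ as polynomials in $y$ with coefficients in $\C[x]$, so that $\operatorname{Res}_y(f,g)\in\C[x]$ and $\deg_x\operatorname{Res}_y(f,g)$ equals the number of its roots in $\C$, counted with multiplicity. The standard characterization is that $a\in\C$ is a root of $\operatorname{Res}_y(f,g)$ precisely when the univariate polynomials $f(a,y)$ and $g(a,y)$ have a common zero in $\PP^1$: either a common finite root, or a ``root at infinity'', meaning $\operatorname{lc}_y(f)(a)=\operatorname{lc}_y(g)(a)=0$.

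Next I would establish the equality for generic coefficients. Fix the supports of $f$ and $g$, with convex hulls $P$ and $Q$, and take the coefficients generic. Then: (i) by the Bernstein theorem and the sharpness of its bound, $f=g=0$ has exactly $\operatorname{Area}(P+Q)-\operatorname{Area}(P)-\operatorname{Area}(Q)$ solutions in $(\C^\ast)^2$, all simple; (ii) $f=g=0$ has no solution with $x=0$ or $y=0$, and $\operatorname{lc}_y(f),\operatorname{lc}_y(g)$ share no root in $x$; (iii) the $x$-coordinates of the torus solutions from (i) are pairwise distinct. Each of the conditions in (ii) and (iii) defines a proper Zariski-closed subset of the space of coefficients — for (ii) this is immediate, and for (iii) it is the locus where two torus solutions collide onto one fibre of the projection to the $x$-line, which is proper once one exhibits a single system of the prescribed shape for which it fails. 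Granting (i)--(iii), Step 1 shows $\operatorname{Res}_y(f,g)$ has exactly $\operatorname{Area}(P+Q)-\operatorname{Area}(P)-\operatorname{Area}(Q)$ simple roots and no others, so equality holds on a nonempty Zariski-open set of coefficients.

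For the inequality in general I would use semicontinuity of the $x$-degree. Write $\operatorname{Res}_y(f,g)=\sum_j c_j x^j$, where each $c_j$ is a fixed universal polynomial in the coefficients of $f$ and $g$. The previous paragraph shows $c_j\equiv 0$ for $j>\operatorname{Area}(P+Q)-\operatorname{Area}(P)-\operatorname{Area}(Q)$, while the coefficient at the top value is not identically zero. Hence for \emph{every} admissible choice of coefficients $\deg_x\operatorname{Res}_y(f,g)\le\operatorname{Area}(P+Q)-\operatorname{Area}(P)-\operatorname{Area}(Q)$, with equality exactly on the open dense set where that top coefficient does not vanish (and the Newton polygons are still $P,Q$). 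This yields both assertions of the corollary.

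The main obstacle is controlling the ``extraneous'' roots of $\operatorname{Res}_y(f,g)$ — precisely conditions (ii) and (iii) of the second step. The resultant detects not only the honest torus solutions that Bernstein counts, but also common zeros at infinity in the $y$-direction and common zeros on the coordinate axes, which $\operatorname{Area}(P+Q)-\operatorname{Area}(P)-\operatorname{Area}(Q)$ does not see; for Newton polygons in special position (for instance when the lowest-$y$-degree part of $f$ or $g$ is not a constant) the classical $\operatorname{Res}_y$ genuinely carries extra factors. The content of the corollary is that for generic coefficients none of these degeneracies occurs, so the toric count of the Bernstein theorem matches the $x$-degree of the resultant on the nose. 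Of these points, (iii) — that the generic torus solution set injects into the $x$-line — is the one requiring the most care; I expect the cleanest route is to produce, for each pair of polygons that arises, one explicit system whose torus solutions have distinct $x$-coordinates, so that the injectivity locus is a nonempty open set.
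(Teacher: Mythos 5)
Your overall strategy --- identify the roots of $\operatorname{Res}_y(f,g)$ with common zeros of $f$ and $g$ (including those at infinity in the $y$-direction), use the sharpness of the Bernstein bound to count torus solutions generically, and then deduce the universal inequality by semicontinuity of the coefficients of the resultant --- is sound, and it is considerably more careful than the paper's own justification, which consists only of the remark that each torus solution contributes a root to the resultant (an observation that by itself bounds the degree from \emph{below}, not above). The semicontinuity step is fine, and your condition (iii) is actually unnecessary: $\deg_x\operatorname{Res}_y(f,g)$ counts roots with multiplicity, and two transverse common zeros lying over the same value $x=a$ simply produce a double root of the resultant at $a$, so no injectivity of the projection to the $x$-line is required and no explicit witness systems need to be constructed.

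The genuine gap is the claim that the conditions in (ii) are ``immediate[ly]'' proper Zariski-closed conditions. For arbitrary Newton polygons they are not, and in fact the corollary as stated is false without further positional hypotheses on $P$ and $Q$. Take $f=ay+bx+cxy$ and $g=a'y+b'x+c'xy$ with generic coefficients, so that $P=Q=\operatorname{Conv}\{(0,1),(1,0),(1,1)\}$ and the right-hand side of the corollary equals $2-\tfrac12-\tfrac12=1$; yet
\[
\operatorname{Res}_y(f,g)=(a+cx)\,b'x-bx\,(a'+c'x)=x\bigl((ab'-a'b)+(cb'-bc')x\bigr)
\]
has degree $2$. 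The extraneous factor $x$ comes from the common zero at the origin, which Bernstein does not see, and it is present for \emph{all} coefficients because neither polynomial has a constant term, so your first condition in (ii) fails identically rather than generically. Likewise, if the top rows of both supports consist only of monomials divisible by $x$ (e.g.\ $f=xy+1$, $g=xy-1$, two parallel segments with mixed area $0$ but $\operatorname{Res}_y(f,g)=-2x$), then $\operatorname{lc}_y(f)$ and $\operatorname{lc}_y(g)$ always share the root $x=0$ and the resultant acquires a root coming from a common zero at $y=\infty$. A correct proof must therefore \emph{assume} --- and, in each application, verify --- that $P$ and $Q$ are positioned so that these degeneracies are non-generic, for instance that both polygons contain the origin and that the leading coefficients in $y$ are generically coprime; under such hypotheses your conditions (ii) do cut out proper closed subsets and the rest of your argument goes through. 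In the paper's actual uses of the corollary the relevant Newton polygons contain $(0,0)$ and meet both coordinate axes, so the applications are not endangered, but as written your step (ii) is asserted rather than proved and is false in the stated generality.
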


\noindent\textbf{Example.}
Let
\[
\begin{aligned}
f(x,y) &= 3x^2y + 2y^2 +2,\\
g(x,y) &= x^3 +  4x^2y + y^3 + 1.
\end{aligned}
\]
Then
\[
P=N(f)=\operatorname{Conv}\{(0,0),(0,2),(2,1)\},\qquad
Q=N(g)=\operatorname{Conv}\{(0,0),(0,3),(3,0)\}.
\]
Their Minkowski sum has vertices
\[
P+Q = \operatorname{Conv}\{(0,0),(5,1),(3,0),(0,5),(2,4)\}.
\]
Computing the areas gives
\[
\operatorname{Area}(P+Q)=31/2,\qquad \operatorname{Area}(P)=2,\qquad \operatorname{Area}(Q)=9/2,
\]
so
\[
\deg_x\operatorname{Res}_y(f,g)=\frac{31}{2} - 2 - \frac{9}{2} = 9.
\]
A direct computation shows that
\[
\operatorname{Res}_y(f,g) = -27x^9 - 48x^7 + 53x^6 + 36x^5 + 80x^4 + 16x^3 - 28x^2 + 16,
\]
which confirms that $\deg_x\operatorname{Res}_y(f,g) = 9$.
\section{Solutions for \texorpdfstring{$k$}{k} large}
By Theorem~\ref{theorem 2}, there are no solutions for $8 \le k \le 20$. 
This suggests that no solutions exist for any $k \ge 8$. 
In this section, rather than proving this full statement, we establish a weaker result: 
the system of equations associated with $f(q) = E_{2k}(q)$ has no solutions when $k$ is sufficiently large.
We follow an elimination strategy analogous to the case $k=2$, but now work over a symbolic coefficient ring so that the procedure can be performed uniformly for all $k$.

The coefficients in the polynomial system can be expressed as polynomials in the quantities $2^{2k-1},3^{2k-1},5^{2k-1},\dots$ and in the factor $B_{2k}/4k$. Since the number of equations eventually exceeds the number of variables, the elimination process produces a nontrivial polynomial relation among these parameters, which must be nonzero and hence implies that the system has no solutions.

To make this precise, we introduce the coefficient ring
\[
R = \mathbb{C}[x_0, x_2, x_3, x_5, \dots],
\]
where $x_0$ and each $x_p$ (with $p$ prime) are algebraically independent variables. Define a formal power series
\[
E(q) = x_0 +\sum_{n=1}^\infty y_n q^n \in R[[q]],
\]
where the coefficients $y_n$ are defined multiplicatively by
\begin{align*}
y_1 &= 1,\\
y_{p^n} &= \sum_{i=0}^n x_p^i \qquad (p\ \text{prime}),\\
y_{mn} &= y_m y_n \qquad ((m,n)=1).
\end{align*}

We now define a specialization map for each positive integer $k$ by
\[
\phi_k : R[[q]] \longrightarrow \mathbb{C}[[q]], \qquad
x_0 \mapsto -\frac{B_{2k}}{4k}, \quad x_p \mapsto p^{2k-1}.
\]
By construction, $\phi_k(y_n)=\sigma_{2k-1}(n)$, so $\phi_k(E(q))$ is a nonzero scalar multiple of $E_{2k}(q)$.

Suppose we can find some multiplicative $g(q)\in R[[q]]$ such that $E(q)g(q)$ is also multiplicative. Then the coefficients of the specialization $\phi_k(g(q))$ give a candidate solution for which $E_{2k}(q)\phi_k(g(q))$ has multiplicative coefficients. Therefore, to rule out solutions for large $k$, we can first show that the symbolic system over $R$ has no solution.

We now apply the same elimination framework as in Section~2, but carried out over $R$. Define
\[
E_{mn} = S_{mn} - \big(b_m S_n + b_n S_m - x_0 S_m S_n\big),
\quad\text{for } (m,n)=1,
\]
where
\[
S_n = \sum_{i=1}^{n-1} y_i\, b_{n-i}.
\]
We consider the system
\[
(\{b_n\}_{n\le 40};\; \{E_n\}_{n\le 40} \cup \{\,b_{mn}-b_m b_n\,\}_{mn\le 40}),
\]
and perform elimination exactly as in the case $k=2$.

Before proceeding, we isolate a key fact about specializations.

\begin{lem}\label{lemma3}
Let $f\in R$ be a non-constant polynomial in the variables $x_0,x_2,x_3,\dots$. Then $\phi_k(f)\neq 0$ for all sufficiently large $k$.
\end{lem}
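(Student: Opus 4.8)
The plan is to exploit the fact that $\phi_k$ sends $x_0\mapsto -B_{2k}/4k$ and $x_p\mapsto p^{2k-1}$, and to track the size (archimedean absolute value) of the image as $k\to\infty$. First I would reduce to the case of a monomial: write $f=\sum_\alpha c_\alpha x^\alpha$ with $c_\alpha\in\C$, and observe that $\phi_k(f)=\sum_\alpha c_\alpha \phi_k(x^\alpha)$, where for $\alpha=(\alpha_0,\alpha_2,\alpha_3,\dots)$ we have
\[
\phi_k(x^\alpha)=\left(-\frac{B_{2k}}{4k}\right)^{\!\alpha_0}\prod_{p}p^{\alpha_p(2k-1)}.
\]
The key point is that distinct monomials grow at genuinely different rates, so that one monomial dominates all others and the sum cannot vanish for large $k$.

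The main technical input is an asymptotic for $|B_{2k}/4k|$. By the classical formula $B_{2k}=(-1)^{k+1}\frac{2(2k)!}{(2\pi)^{2k}}\zeta(2k)$, we have $|B_{2k}/4k|=\frac{(2k-1)!}{2(2\pi)^{2k}}\zeta(2k)$, which by Stirling grows super-exponentially: more precisely $\log|B_{2k}/4k| = 2k\log(2k) - 2k - 2k\log(2\pi) + O(\log k)$. Meanwhile $\log\phi_k(x_p) = (2k-1)\log p$ grows only linearly in $k$. Hence for a monomial $x^\alpha$,
\[
\log|\phi_k(x^\alpha)| = \alpha_0\bigl(2k\log(2k) - 2k - 2k\log(2\pi)\bigr) + (2k-1)\sum_p \alpha_p\log p + O_\alpha(\log k).
\]
The leading term $2k\log(2k)$ appears with coefficient $\alpha_0$; among monomials with the same $\alpha_0$, the linear-in-$k$ coefficient $2\sum_p\alpha_p\log p$ distinguishes them because $\{\log p : p\text{ prime}\}$ is linearly independent over $\Q$ (indeed over $\Z$, by unique factorization, which is all we need here since the $\alpha_p$ are integers). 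Therefore, for any two distinct exponent vectors $\alpha\neq\beta$ occurring in $f$, the ratio $|\phi_k(x^\alpha)/\phi_k(x^\beta)|$ tends to $0$ or $\infty$ as $k\to\infty$. Consequently there is a unique dominant monomial $x^{\alpha^*}$ with $|\phi_k(x^{\alpha^*})|$ strictly larger than the sum of the absolute values of all other terms once $k$ is large, which forces $\phi_k(f)\neq 0$.

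Concretely, I would carry out the steps as follows: (1) expand $f$ in monomials and note $f$ non-constant means at least one $\alpha\neq 0$ occurs; (2) define a total order on the finitely many occurring exponent vectors by comparing first $\alpha_0$, then $\sum_p\alpha_p\log p$ (ties in the latter are impossible for distinct vectors with equal $\alpha_0$, by unique factorization of the integer $\prod p^{\alpha_p}$), and let $\alpha^*$ be the maximum; (3) invoke the Stirling/zeta asymptotic for $\log|B_{2k}/4k|$ to show $\log|\phi_k(x^{\alpha^*})| - \log|\phi_k(x^\beta)| \to +\infty$ for every other occurring $\beta$; (4) conclude by the triangle inequality that $|\phi_k(f)| \ge |c_{\alpha^*}||\phi_k(x^{\alpha^*})| - \sum_{\beta\neq\alpha^*}|c_\beta||\phi_k(x^\beta)| > 0$ for $k$ large.

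The step I expect to be the main obstacle is step (2)--(3): one must be careful that when $\alpha_0=0$ for the dominant monomial but $\alpha_0>0$ for some competitor, or vice versa, the comparison still goes the right way --- i.e. that the super-exponential $B_{2k}$ term genuinely beats any fixed power of the $p^{2k-1}$, which it does since $2k\log(2k)$ dominates $(2k-1)C$ for any constant $C$. A mild subtlety is that $B_{2k}/4k$ could in principle be compared against a large product of small primes; but since only finitely many primes $x_p$ appear in the fixed polynomial $f$, the bound $\sum_p\alpha_p\log p$ is a fixed finite constant for each monomial, so this causes no difficulty. Once the ordering is set up correctly, the rest is a routine application of Stirling's formula and the triangle inequality.
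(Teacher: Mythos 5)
Your proposal is correct and follows essentially the same route as the paper: identify a dominant monomial using the super-exponential growth of $|B_{2k}/4k|$ (Stirling) versus the exponential growth $p^{2k-1}$, together with the fact that distinct $x_p$-monomials specialize to powers $n^{2k-1}$ of distinct integers $n$. Your write-up is in fact more careful than the paper's (which argues loosely by cases on whether $x_0$ occurs), and the only blemish is an immaterial factor of $2$ in your formula for $|B_{2k}/4k|$, which does not affect the asymptotics.
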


\begin{proof}
Without loss of generality, assume that $f$ is irreducible.

If $x_0$ appears nontrivially in $f$, then $\phi_k(x_0)=-\frac{B_{2k}}{4k}$ grows faster than any fixed exponential as $k\to\infty$. The highest power of $\phi_k(x_0)$ in $f$ therefore dominates all lower powers of $\phi_k(x_0)$ and the contributions of the other variables, so $\phi_k(f)\neq 0$ for large $k$.

Suppose now that $f$ is independent of $x_0$. Then every monomial of $f$ is a product of powers of the variables $x_p$ ($p$ prime). Under specialization, we have $\phi_k(x_p)=p^{2k-1}$, and distinct monomials correspond to distinct $(2k-1)$-st powers of integers $n^{2k-1}$. The term with the largest $n$ dominates the others as $k\to\infty$, and hence the sum cannot vanish for all large $k$.
\end{proof}

As before, we eliminate all variables $b_n$ with $n \notin \mathbb{P}$ using the relations $b_{mn}=b_m b_n$. After these substitutions, only the variables $b_n$ with $n\in\mathbb{P}$ remain.

Next, by Lemma~\ref{lemma1}, for every $p\in\mathbb{P}$ with $p+1\notin\mathbb{P}$, the coefficient $b_p$ appears linearly in $E_{p+1}$, with
\[
\operatorname{coef}_{b_p}(E_{p+1}) = y_{p+1-p} = y_1 = 1,
\]
which remains nonzero under any specialization $\phi_k$. Therefore, each such $b_p$ may be eliminated by solving $E_{p+1}=0$ and substituting the result into the remaining system. After carrying out these eliminations, the reduced system becomes
\[
(\{b_2,b_3,b_4,b_7,b_8,b_{16},b_{31}\};\; \{E_{15},E_{21},E_{22},E_{34},E_{35},E_{36},E_{39},E_{40}\}).
\]

We now eliminate $b_8$, $b_{16}$, and $b_{31}$. Before doing so, we introduce the auxiliary polynomials
\[
\begin{aligned}
A &= -x_2^4 + 2x_2^3x_3 - x_2^2x_3^2 + x_2^2x_5 - x_2^2 - 2x_2x_3^2 - 4x_2x_3 + 2x_2x_5 - 2x_2 \\
  &\qquad {} - x_3^2 - 2x_3 + x_5 + x_7,\\[4pt]
B &= -2x_2^3 + x_2^2x_3 - 3x_2^2 + 2x_2x_3 - 2x_2 + x_3 + x_5,\\[4pt]
C &= -x_2^2 - 2x_2 + x_3.
\end{aligned}
\]
A computation in \texttt{SageMath} shows that
\[
\deg_{b_8}(E_{15}) = 1,
\qquad
\operatorname{coef}_{b_8}(E_{15}) = A\,x_0^2;
\]
\[
\deg_{b_{16}}(E_{21}) = 1,
\qquad
\operatorname{coef}_{b_{16}}(E_{21}) = A B\, x_0^3;
\]
\[
\deg_{b_{31}}(E_{34}) = 1,
\qquad
\operatorname{coef}_{b_{31}}(E_{34}) = A B C\, x_0^4.
\]

Hence we are able to eliminate $b_8$, $b_{16}$, and $b_{31}$. After these eliminations, the system becomes
\[
(\{b_2,b_3,b_4,b_7\};\; \{E_{22},E_{35},E_{36},E_{39},E_{40}\}).
\]
From this point onward, we replace each $E_n$ by its numerator, since $E_n=0$ is equivalent to its numerator being zero, and this simplifies the computations in \texttt{SageMath}.

We now begin the resultant phase. Define
\[
F_{35} = \operatorname{Res}_{b_7}(E_{22},E_{35}),\qquad
F_{36} = \operatorname{Res}_{b_7}(E_{22},E_{36}),\qquad
F_{39} = \operatorname{Res}_{b_7}(E_{22},E_{39}),\qquad
F_{40} = \operatorname{Res}_{b_7}(E_{22},E_{40}),
\]
and then
\[
G_{36} = \operatorname{Res}_{b_4}(F_{35},F_{36}),\qquad
G_{39} = \operatorname{Res}_{b_4}(F_{35},F_{39}),\qquad
G_{40} = \operatorname{Res}_{b_4}(F_{35},F_{40}).
\]
Here $b_7$ is linear in $E_{22}$, and we write $E_{22}=P_{22}b_7+Q_{22}$ with $P_{22},Q_{22}$ independent of $b_7$. As in the $k=2$ case, $(P_{22})^2$ divides each $G_n$ for $n\in\{36,39,40\}$. We then set
\[
H_{36} = \operatorname{Res}_{b_3}\!\Big(\tfrac{G_{39}}{(P_{22})^2},\;\tfrac{G_{36}}{(P_{22})^2}\Big),\qquad
H_{40} = \operatorname{Res}_{b_3}\!\Big(\tfrac{G_{39}}{(P_{22})^2},\;\tfrac{G_{40}}{(P_{22})^2}\Big).
\]
This leads to two subsystems:
\[
\text{sys}_{1} : \big(\{b_2\};\ \{H_{36},H_{40}\}\big),\qquad
\text{sys}_{2} : \big(\{b_2,b_3,b_4,b_7\};\ \{P_{22},Q_{22},E_{35},E_{36},E_{39},E_{40}\}\big).
\]

For $\text{sys}_1$, define
\[
I_{40} = \operatorname{Res}_{b_2}(H_{36},H_{40}).
\]
If $I_{40}\neq 0$, then $\text{sys}_1$ has no solution in $R$.

For $\text{sys}_2$, set
\[
R_{22} = \operatorname{Res}_{b_3}(P_{22},Q_{22}),\qquad
R_{35} = \operatorname{Res}_{b_3}(P_{22},E_{35}),\qquad
R_{36} = \operatorname{Res}_{b_3}(P_{22},E_{36}),\qquad
R_{39} = \operatorname{Res}_{b_3}(P_{22},E_{39}),
\]
then
\[
T_{36} = \operatorname{Res}_{b_7}(R_{35},R_{36}),\qquad
T_{39} = \operatorname{Res}_{b_7}(R_{35},R_{39}),
\]
and
\[
U_{36} = \operatorname{Res}_{b_4}(R_{22},T_{36}),\qquad
U_{39} = \operatorname{Res}_{b_4}(R_{22},T_{39}).
\]
Finally, set
\[
V_{39} = \operatorname{Res}_{b_2}(U_{36},U_{39}).
\]
If $V_{39}\neq 0$, then $\text{sys}_2$ has no solution in $R$.

By Lemma~\ref{lemma3}, when $k$ is sufficiently large, taking resultants commutes with specialization $\phi_k$ (because the degrees in each eliminated variable agree before and after specialization). Therefore $\phi_k(H_{n}) = H_{k,n}$ for $n=36,40$ and $\phi_k(U_n) = U_{k,n}$ for $n=36,39$. The remaining task is to prove that $I_{40}$ and $V_{39}$ are nonzero. Directly computing $I_{40}$ and $V_{39}$ in \texttt{SageMath} is infeasible because of size, so we proceed differently:

\begin{itemize}[leftmargin=2em]
\item First, we use Newton polygons (and the Bernstein bound) to predict the \emph{generic} degrees of $H_n$ and $U_n$.
\item Second, we evaluate at a small test value of $k$ for which the specialized versions $H_{k,n}$ and $U_{k,n}$ are explicitly computable.
\end{itemize}

We want:
\begin{itemize}[leftmargin=2em]
\item  $\deg H_{k,36}, \deg H_{k,40}$ match the predicted generic degree of $H_{36}, H_{40}$;
\item $\deg U_{k,36}, \deg U_{k,39}$  match the predicted generic degree of $U_{36},U_{39}$;
\item and also that $\gcd(H_{k,36},H_{k,40})=1$ and $\gcd(U_{k,36},U_{k,39})=1$.
\end{itemize}
If all of this holds for some test value of $k$, then $I_{40}$ and $V_{39}$ are genuinely nonzero in $R$. Lemma~\ref{lemma3} then implies that, for $k$ sufficiently large, neither $\text{sys}_1$ nor $\text{sys}_2$ has a solution.

From this point onward, we record the extreme monomials of each resultant and track the Newton polygons.

For $\text{sys}_1$, the triples $(X,Y,Z)$ such that $b_2^X b_3^Y b_4^Z$ is an extreme monomial of $F_{35}$ are
\[
\begin{aligned}
&(7,0,0), (5,1,0), (5,0,1), (4,2,0), (4,0,1), (3,1,1), (3,0,2), (2,3,0), (2,2,1),\\
&(2,0,2), (0,3,0), (1,2,1), (1,1,2), (1,0,3), (0,4,0), (0,3,1), (0,0,3).
\end{aligned}
\]
For $F_{36},F_{39},F_{40}$, the extreme triples are
\[
\begin{aligned}
&(7,0,0), (5,1,0), (5,0,1), (4,2,0), (4,0,1), (3,1,1), (3,0,2), (2,3,0), (2,2,1),\\
&(2,0,2), (1,3,0), (1,2,1), (1,1,2), (1,0,3), (0,4,0), (0,3,1), (0,2,2), (0,0,3).
\end{aligned}
\]

To control the degree of the $b_4$–resultants abstractly, set dummy polynomials of the same extreme support:
\[
\begin{aligned}
\text{check}_{F_{35}} ={}& a z^3 + b x^3 z^2 + c x y z^2 + d x^5 z + e x^3 y z + f x^2 y^2 z \\
        & {}+ g y^3 z + h x^7 + i x^5 y + j x^4 y^2 + k x^2 y^3 + o y^4,\\
\text{check}_{F_{36,39,40}} ={}& a' x z^3 + b' x^3 z^2 + c' x y z^2 + p' y^2 z^2 + d' x^5 z + e' x^3 y z + f' x^2 y^2 z \\
        & {}+ g' y^3 z + h' x^7 + i' x^5 y + j' x^4 y^2 + k' x^2 y^3 + o' y^4,
\end{aligned}
\]
and consider $\operatorname{Res}_z(\text{check}_{F_{35}},\text{check}_{F_{36,39,40}})$. A direct computation shows that the pairs $(X,Y)$ for which $x^X y^Y$ is an extreme monomial of this resultant are
\[
\begin{aligned}
&(24,0),(22,1),(21,2),(19,3),(18,4),\\
&(16,5),(15,6),(13,7),(12,8),(10,9),\\
&(8,10),(6,11),(4,12),(2,13),(0,14).
\end{aligned}
\]
First, we note that
\[
\operatorname{coef}_{x^{11}y^8}\!\big(\operatorname{Res}_z(\text{check}_{F_{35}},\text{check}_{F_{36,39,40}})\big)\neq 0.
\]
Second, the coefficient of $x^{12}y^8$ equals
\[
a'\cdot(fa' - bp')\cdot(j^2 p' - f j f' + f^2 j').
\]
In our application, the factor $fa'-bp'$ corresponds to
\[
\operatorname{coef}_{b_2^2 b_3^2 b_4}(F_{35})\cdot \operatorname{coef}_{b_2 b_4^3}(F_{n})
\;-\;
\operatorname{coef}_{b_2^3 b_4^2}(F_{35})\cdot \operatorname{coef}_{b_3^2 b_4^2}(F_{n}),
\qquad n\in\{36,39,40\}.
\]
A \texttt{SageMath} computation shows that this combination vanishes for each $n\in\{36,39,40\}$; hence the monomial $b_2^{12} b_3^8$ does not occur in $G_n$ for $n\in\{36,39,40\}$. On the other hand, computing $G_{2,n}$ for $n\in\{36,39,40\}$ directly shows that all other extreme monomials match, and that $b_2^{11}b_3^8$ does occur in $G_n$. Therefore, generically, the extreme pairs $(X,Y)$ for the monomials $b_2^X b_3^Y$ occurring in $G_n$ are
\[
\begin{aligned}
&(24,0),(22,1),(21,2),(19,3),(18,4),\\
&(16,5),(15,6),(13,7),(11,8),(10,9),\\
&(8,10),(6,11),(4,12),(2,13),(0,14).
\end{aligned}
\]

The extreme monomials of $P_{22}$ are $b_2^2$ and $b_3$. Consequently, the extreme pairs $(X,Y)$ for $G_n/(P_{22})^2$ are
\[
\begin{aligned}
&(20,0),(18,1),(17,2),(15,3),(14,4),\\
&(12,5),(11,6),(9,7),(7,8),(6,9),\\
&(4,10),(2,11),(0,12).
\end{aligned}
\]
Let $N(G_n/(P_{22})^2)$ denote the Newton polygon of $G_n/(P_{22})^2$ in the $(X,Y)$–plane. Then the vertices of $N(G_n/(P_{22})^2)$ are
\[
(20,0),\ (11,6),\ (6,9),\ (0,12),\ (0,0),
\]
and the vertices of $N(G_n/(P_{22})^2)+N(G_n/(P_{22})^2)$ are
\[
(40,0),\ (22,12),\ (12,18),\ (0,24),\ (0,0).
\]
We compute
\[
\operatorname{Area}\big(N(G_n/(P_{22})^2)\big)=\frac{255}{2},\qquad
\operatorname{Area}\big(N(G_n/(P_{22})^2)+N(G_n/(P_{22})^2)\big)=510.
\]
It follows from the Bernstein bound that, generically,
\[
\deg(H_{n})=510-\frac{255}{2}-\frac{255}{2}=255 \qquad (n\in\{36,40\}).
\]

Now pick a test value $k=6$, for which a direct computation is feasible. We find
\[
\deg_{b_2}(H_{6,36})=\deg_{b_2}(H_{6,40})=255.
\]
Thus $H_{36}$ and $H_{40}$ attain the generic degree $255$. Moreover, for $k=6$ we have
\[
\gcd(H_{6,36},H_{6,40})=1.
\]
Therefore
\[
I_{40}=\operatorname{Res}_{b_2}(H_{36},H_{40})
\]
is a nonzero polynomial in $R$. By Lemma~\ref{lemma3}, $\text{sys}_{k,1}$ has no solutions when $k$ is large.

\medskip

For $\text{sys}_2$, it is possible to compute $R_n$ directly. The polynomial $R_{22}$ depends only on $b_2$ and $b_4$, and the extreme pairs $(X,Y)$ for the monomials $b_2^Xb_4^Y$ occurring in $R_{22}$ are
\[
(5,0),\ (3,1),\ (0,2).
\]
The polynomials $R_{35}, R_{36}, R_{39}$ lie in $R[b_2,b_4,b_7]$. The extreme triples $(X,Y,Z)$ for $b_2^Xb_4^Yb_7^Z$ occurring in $R_{35}$ are
\[
(6,0,0),\ (4,1,0),\ (3,0,1),\ (0,2,0),\ (0,1,1),
\]
and in $R_{36},R_{39}$ are
\[
(6,0,0),\ (4,1,0),\ (3,0,1),\ (2,2,0),\ (1,1,1).
\]
Introduce dummy polynomials
\[
\begin{aligned}
\text{check}_{R_{35}} ={}& a x^6 + b x^4y + c x^3z + d y^2 + e yz + f,\\
\text{check}_{R_{36,39}} ={}& a' x^6 + b' x^4y + c' x^3z + d' y^2 z^2 + e' xyz + f'.
\end{aligned}
\]
Computing $\operatorname{Res}_{z}(\text{check}_{R_{35}}, \text{check}_{R_{36,39}})$, we find that the extreme pairs $(X,Y)$ for $x^Xy^Y$ in $\text{check}_{T_{36,39}}$ are
\[
(9,0),\ (7,1),\ (5,2),\ (2,3).
\]
Here the term $b_2^5b_4^2$ is special: we obtain
\[
\operatorname{coef}_{x^5y^2}\!\big(\operatorname{Res}_{z}(\text{check}_{R_{35}}, \text{check}_{R_{36}})\big) = cd'-be',
\]
which corresponds in $T_{36}$ to
\[
\operatorname{coef}_{b_2^3b_7}(R_{35})\cdot\operatorname{coef}_{b_4^2b_7^2}(R_{36})
\;-\;
\operatorname{coef}_{b_2^4b_4}(R_{35})\cdot\operatorname{coef}_{b_2b_4b_7}(R_{36}),
\]
and in $T_{39}$ to
\[
\operatorname{coef}_{b_2^3b_7}(R_{35})\cdot\operatorname{coef}_{b_4^2b_7^2}(R_{39})
\;-\;
\operatorname{coef}_{b_2^4b_4}(R_{35})\cdot\operatorname{coef}_{b_2b_4b_7}(R_{39}).
\]
Using \texttt{SageMath}, both combinations are zero. On the other hand, specializing $k=2$ confirms that the term $b_2^4b_4^2$ is nonzero in $T_{2,36}$ and $T_{2,39}$. Therefore, the extreme pairs $(X,Y)$ for $b_2^Xb_4^Y$ in $T_{36},T_{39}$ are
\[
(9,0),\ (7,1),\ (4,2),\ (2,3).
\]

Next, set
\[
\begin{aligned}
\text{check}_{R_{22}} ={}& a x^5 + b x^3y + c y^2 + d,\\
\text{check}_{T_{36,39}} ={}& a' x^9 + b' x^7y + c' x^4 + d' x^2y^3 + e'.
\end{aligned}
\]
Computing $\operatorname{Res}_{y}(\text{check}_{R_{22}}, \text{check}_{T_{36,39}})$, we find that its degree in $x$ is $20$. For this subsystem we take $k=2$ as our test value. We obtain
\[
\deg(U_{2,36})=\deg(U_{2,39})=20
\quad\text{and}\quad
\gcd(U_{2,36}, U_{2,39})=1.
\]
Therefore,
\[
V_{39} = \operatorname{Res}_{b_2}(U_{36},U_{39})
\]
is a nonzero polynomial in $R$. By Lemma~\ref{lemma3}, $\text{sys}_{k,2}$ has no solutions when $k$ is large.


\begin{thebibliography}{8}

\bibitem{Bernstein}
D.~N.~Bernstein,
\emph{The number of roots of a system of equations},
Funktsional. Anal. i Prilozhen. \textbf{9} (1975), no.~3, 1--4;
English transl., \emph{Functional Analysis and Its Applications} \textbf{9} (1975), no.~3, 183--185.

\bibitem{Duke}
W.~Duke,
When is the product of two Hecke eigenforms an eigenform?
In: \emph{Number theory in progress, Vol.~2} (Zakopane-Ko\'scielisko, 1997),
de Gruyter, Berlin, 1999, 737--741.

\bibitem{Emmons}
B.~A.~Emmons,
Products of Hecke eigenforms.
\emph{J.\ Number Theory} \textbf{115} (2005), no.~2, 381--393.

\bibitem{GKZ}
I.~M.~Gelfand, M.~M.~Kapranov, and A.~V.~Zelevinsky,
\emph{Discriminants, Resultants and Multidimensional Determinants},
Mathematics: Theory \& Applications, Birkh\"auser, Boston, 1994.

\bibitem{Ghate}
E.~Ghate,
On products of eigenforms.
\emph{Acta Arith.} \textbf{102} (2002), no.~1, 27--44.

\bibitem{Johnson}
M.~L.~Johnson,
Hecke eigenforms as products of eigenforms.
\emph{J.\ Number Theory} \textbf{133} (2013), no.~7, 2339--2362.

\bibitem{KZ}
Kaneko, Masanobu; Zagier, Don:
A generalized Jacobi theta function and quasimodular forms. \textit{The moduli space of curves (Texel Island, 1994)}, 165 172, Progr. Math., 129, Birkh\"auser Boston, Boston, MA, 1995.

\bibitem{Larsen}
M.~Larsen,
Multiplicative series, modular forms, and Mandelbrot polynomials.
With an appendix by A.~Larsen.
\emph{Math.\ Comp.} \textbf{90} (2021), no.~327, 345--377.

\bibitem{LMFDB}
LMFDB Collaboration, \emph{Newform orbit 2.8.a.a. Modular forms -- LMFDB},
available at \url{https://www.lmfdb.org/ModularForm/GL2/Q/holomorphic/2/8/a/a/}.


\bibitem{Minding}
F.~Minding,
\"Uber die Bestimmung des Grades einer durch Elimination hervorgehenden Gleichung.
\emph{J.\ Reine Angew.\ Math.} \textbf{22} (1841), 178--183.

\bibitem{Sturm}
J.~Sturm, \emph{On the congruence of modular forms}, in
\emph{Number Theory (New York, 1984--1985)},
Lecture Notes in Mathematics \textbf{1240}, Springer, Berlin, 1987, pp.~275--280.


\end{thebibliography}
\end{document}